\renewcommand\>[1]{\vec{#1}}
\newtheoremstyle{sans}{\parskip}{\parskip}{\itshape}
                       {0pt}{\bfseries\sffamily}{.}{ }{}
\newtheoremstyle{sansplain}{\parskip}{\parskip}{}
                       {0pt}{\bfseries\sffamily}{.}{ }{}
\theoremstyle{sans}
\newtheorem{prop}{Proposition}[section]
\newtheorem{thm}[prop]{Theorem}
\newtheorem{lem}[prop]{Lemma}
\newtheorem{rem}[prop]{Remark}
\theoremstyle{sansplain}
\newtheorem*{theorem*}{Theorem}
\newcommand\C{\mathcal{C}}
\newcommand\Lc{\mathcal{L}}
\newcommand\Oc{\mathcal{O}}
\newcommand\N{\mathbb{N}}
\newcommand\Pb{\mathbb{P}}
\newcommand\Xf{\mathbf{X}}
\newcommand\Ta{\mathcal{T}}
\renewcommand{\geq}{\geqslant}
\renewcommand{\leq}{\leqslant}
\def\Qc{\mathcal{Q}}
\def\DD{\displaystyle}
\newcommand\1{\leavevmode\hbox{\rm \small1\kern-0.35em\normalsize1}}
\def\egaldef{\stackrel{\mbox{\tiny def}}{=}}
\renewcommand\>[1]{\vec{#1}}
\begin{document}

\title{Thermodynamical limits for  models of car-sharing systems: the Autolib' example}
\author{Guy Fayolle\thanks{INRIA  Paris, 48 rue Barrault, 75013 Paris, France. Email: {\tt Guy.Fayolle@inria.fr}}    \and
        Christine Fricker \thanks{INRIA Paris, 48 rue Barrault, 75013 Paris, France. E.mail: {\tt Christine.Fricker@inria.fr}} \vspace*{0.7cm}\\ \hspace*{2.5cm} \emph{January 2025}}
 \date{}
\maketitle
\begin{abstract}
We analyze mean-field equations obtained  for models  motivated by a large station-based car-sharing system in France called \emph{Autolib’}. The main focus is on a version where  users reserve a parking space  when they take a car. In a first model, the reservation of parking spaces is effective for all users (see \cite{bourdais2020mean}) and capacity constraints are ignored.  The model is carried out in thermodynamical limit, that is  when the number $N$ of stations and the number of cars $M_N$ tend to infinity, with $U=\lim_{N \rightarrow \infty}M_N/N$. This limit is described by Kolmogorov's equations of a two-dimensional time-inhomogeneous Markov process depicting the numbers of reservations  and cars at a station. It satisfies a non-linear differential system. We prove analytically that this system has a unique solution, which converges, as $t\to\infty$, to an equilibrium point exponentially fast. Moreover, this equilibrium point corresponds to the stationary distribution of a two queue tandem (reservations, cars),  which is here always ergodic. The intensity factor of each queue has an explicit form obtained from an intrinsic mass conservation relationship. 
Two related models with capacity constraints are briefly presented in the last section: the simplest one with no reservation leads to a one-dimensional problem; the second one corresponds to our first model with finite total capacity~$K$.
\end{abstract}
\keywords{Markov Process, Queueing Systems, Thermodynamical limit, Mean-field, Car sharing}.

\section{Introduction}\label{sec:intro}
The paper investigates asymptotic properties for some time-inhomogeneous Markov processes  obtained as mean-field limits. More precisely, in a system with $N$ sites and $M_N$ particles, they describe the limiting behavior of an arbitrary site in the system,  when the numbers $N$ and $M_N$  get large together at the same speed, i.e. when $M_N/N$ tends to a constant $U>0$ as $N\to\infty$. The limit thus obtained is generally called  the {\em thermodynamic limit} in statistical physics. 

Such a  Markov process is described by its transient probability given by  the forward  Kolmogorov's equations. We prove the existence and uniqueness of a solution to these non-linear differential equations, as well as an exponential rate of convergence  toward the stationary regime. This is a main issue, which appears to be crucial in the literature to obtain further results on the mean-field convergence as, for example, the so-called {\em refined mean-field} in \cite{gast2017refined, allmeier2022mean}.

This question has also been investigated for finite state reversible  dynamics \cite{budhiraja2015local,budhiraja2015limits}, involving the construction of Lyapounov functions by using relative entropy. Our study goes beyond this framework with a simple model  emanating from car-sharing systems (CSS) (see \cite{bourdais2020mean}), which can also be represented as systems of interacting particles. Here,  stations are viewed as sites and cars become particles. The main feature of these station-based CSS (e.g. \emph{Autolib'}) is that users can reserve a parking slot at their destination. 

\subsection{A particle  model with reservation}\label{sec:particle} 
Let us describe the  model as an homogeneous version of simplified car-sharing dynamics. Consider $M_N$ particles randomly spread among $N$ sites, each site having a capacity $K\leq +\infty$, so that $M_N\le KN$ when $K$ is finite. Particles can be of two types, either a reserved parking space (reservation) or a car. At time $t=0$, the repartition of the particles  among the sites is arbitrary.
From  site $i$, after a random time exponentially distributed with parameter $\lambda$, a car decides to go to site $j$ with probability $1/N$ and a reservation appears at site~$j$.  If the reservation is possible, i.e. when the total number of particles at site $j$ is less than $K$, then the car leaves $i$, and the  reservation at $j$ remains for a travel time exponentially distributed with parameter $\mu$ (the case $i=j$ is possible, with the same parameters). Otherwise, no car departure occurs. 

It should be noted that another dynamics would be to allow the particle of type car to  randomly search for an unsaturated site, but we shall not consider this model.

\subsection{A queueing description of the model}\label{sec:queueing}  
We will recast the mean field system introduced above in a queueing context, and it will be useful to recall the classical notation, which can be found e.g. in~\cite{GnKo}.

$M/M/1/\infty$ stands for a single server queue, with Poisson input, exponentially distributed service times, infinite capacity. In this study, the service discipline can be FIFO (first-in-first-out), LIFO (last-in-first-out) or RAND (random order). When the Poisson intensity is time-dependent, we shall write $M(t)/M/1/\infty$; otherwise, we shall write
$./M/1/\infty$ for an unspecified arrival point process.

Similarly, $M/M/\infty$ (resp. $M(t)/M/\infty$) denotes a queue with infinitely many servers, Poisson input with constant (resp. time dependent) intensity and exponentially distributed service times.

In the queueing systems just defined, arrivals and service times are independent.

The  model proposed in Section~\ref{sec:particle} can alternatively be viewed as a closed Jackson network (see \cite{BCMP}), with $M_N$ customers. This means that each of the  $N$ sites can be replaced by   a   tandem with total capacity $K$.  The customers in the first infinite-server queue of  tandem  $i$ represent reservations, and customers in the second $M/M/1/\infty$ queue represent available cars.   The service times of the tandem are, respectively, the travel times of the cars and the inter-arrival times of the users. Both are independent and exponentially distributed, with respective parameters  $\mu$ and  
$\lambda$. Note that the routing considered here involves a blocking policy. Indeed, if the   destination  site (tandem) is saturated, the user request is rejected.

\subsection{Main outcomes}
\begin{itemize}
    \item Section~\ref{sec:mod1} is devoted to a mean-field model of the type presented in Section~\ref{sec:particle}, in which reservation of parking spaces is effective for all users (see \cite{bourdais2020mean}) and capacity constraints are ignored ($K=\infty$). We prove existence and uniqueness of the equations describing the evolution of an underlying dynamical system. The related stationary distribution has a product form corresponding to a Jackson network of two queues in tandem, where the intensity factor in each queue  is obtained as the unique admissible root of a second degree equation, derived from a mass conservation relationship. In addition, the speed of convergence toward equilibrium is exponential.
    \item Section~\ref{sec:thm} contents the proof of the main theorem stated in Section~\ref{sec:mod1}.
    \item In Section~\ref{sec:others}, two related models with capacity constraints are presented: one without reservation, which leads to a single queue problem, the second being merely the model of Section~\ref{sec:mod1} with finite $K$.
    \item The Appendix provides some technical properties, in particular stochastic dominance, concerning the dependence of time-inhomogeneous birth and death processes on the parameters.
\end{itemize}

\section{Thermodynamical limit in the unbounded capacity case}\label{sec:mod1}
In this section and in Section~\ref{sec:thm}, we focus on the case $K=\infty$, referring to Section~\ref{sec:others} for variants of the model with finite~$K$.

Although  system dynamics  may seem simpler when $K=\infty$, its study is still far from complete. For related topics, we refer for instance to \cite{DeFa, fricker2017equivalence}.

For any fixed $N$, let us denote by $R^N_i(t)$ and $V^N_i(t)$ the number of reserved slots and cars at site $i \;(1\leq i\leq N)$, at time $t\ge0$.
Then the  process 
\[
   (R^N(t),V^N(t))=\{(R^N_i(t),V^N_i(t)),1\leq i\leq N\} 
\]
is Markovian and  irreducible  on the finite state-space
\begin{align*}
 \mathcal{S}^N=\{(r,v)=(r_i,v_i)_{ 1\le i \le N}\in \N^{2N}, \sum_{i=1}^N (r_i+v_i) =M_N \} , 
\end{align*}
with a unique  invariant probability measure. Its infinitesimal generator is given by 
\begin{align*}
 G^Nf(r,v)&= \frac{\lambda}{N}\sum_{i,j=1}^N \big( f(r+e_j, v-e_i)-f(r,v)\big)1_{v_i>0} \\
 &+ \mu  \sum_{i=1}^{N} r_i \big( f(r-e_i,v+e_i)-f(r,v)\big), \quad \forall (r,v)\in \mathcal{S}^N,
\end{align*}
for any function $f(\cdot,\cdot)$ with support on $\mathcal{S}^N$.
The so-called  \emph{mean-field} approach is used to investigate the large-scale behavior of the system. We start from the empirical measure given, for any pair 
$(j,k)\in\N^2 $ and $t\ge0$, by 
\begin{align*}
   \alpha^N_{j,k}(t)=\frac{1}{N}\sum_{i=1}^N 1_{\{(R^N_i(t),V^N_i(t))=(j,k)\}} .
\end{align*}
With this definition, $\alpha^N_{j,k}(t)$ represents the proportion of tandems (sites)  at time $t$, with $j$ and $k$ customers in their respective queues. 

 Let $\>\alpha^N(t) \egaldef \{\alpha^N_{j,k}(t)\}_{(j,k) \in \N^2}$. Note that, since there are $M_N$ particles in the system as defined in Section~\ref{sec:particle}, we have the following conservation equality
\begin{align*}
  \sum_{j=1}^N (j+k)\alpha^N_{j,k}(t)=\frac{M_N}{N}, \quad t\ge0. 
\end{align*}

In the sequel, we are mainly interested in the limit behavior of the system as $N\to\infty$, assuming 
\[
U\egaldef\DD\lim_{N\to\infty}\frac{M_N}{N}.
\]
Letting $\Lambda$ denote the set of probability measures $\{\rho_{j,k}, j,k\geq 0\}$ on $\N^2$, subject to the constraint 
\begin{equation}\label{eq:Lambda}
\sum_{j,k\geq 0}  (j+k)\rho_{j,k}=U,
\end{equation}
the following proposition holds.

\begin{prop}
  Assume $\>\alpha^N(0)$ converges weakly, as $N\to\infty$, to some fixed distribution 
 $\>\alpha_0\in \Lambda$. Then  the empirical measure  
$\>\alpha^N(t)$ converges  in distribution to a deterministic dynamical system denoted by $\>\alpha(t)$, which  satisfies  the  
following infinite system of nonlinear forward Kolmogorov's equations
\begin{align} \label{eq:kolmo1}
 \frac{d\alpha_{j,k}(t)}{dt}+\left[\lambda b(t)+\mu j +\lambda \1_{\{k>0\}}\right]\alpha_{j,k}(t) & = \nonumber \\ 
 \lambda b(t)\alpha_{j-1,k}(t)\1_{\{j>0\}} 
+\mu(j+1)\alpha_{j+1,k-1}(t)\1_{\{k>0\}} &+\lambda \alpha_{j,k+1}(t), \quad (j,k) \in \N^2, \ t\ge0,
\end{align}
 where $\>\alpha(t)\in \Lambda$, $\>\alpha(0)= \>\alpha_0$ and $\DD b(t) \egaldef 1-\sum_{j\in\N} \alpha_{j,0}(t)$.
\end{prop}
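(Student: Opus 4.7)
The plan is to follow the standard semimartingale/martingale-problem approach to mean-field convergence. First I would write the Dynkin decomposition for the empirical functional $F_f(r,v)\egaldef\frac{1}{N}\sum_{i=1}^N f(r_i,v_i)$ with $f:\N^2\to\mathbb{R}$ bounded, giving
\begin{equation*}
\langle\>\alpha^N(t),f\rangle=\langle\>\alpha^N(0),f\rangle+\int_0^t\mathcal{L}^N f(\>\alpha^N(s))\,ds+M^N_f(t),
\end{equation*}
where $M^N_f$ is a square-integrable martingale. A direct computation of $G^N F_f$ from the generator shows that $\mathcal{L}^N f(\>\alpha)$ equals, up to a $O(1/N)$ error coming from the diagonal $i=j$ contributions in the double sum, the action of the limit operator read off the right-hand side of~\eqref{eq:kolmo1}, with the effective reservation arrival intensity given precisely by $\lambda b^N(t)=\lambda\bigl(1-\sum_j\alpha^N_{j,0}(t)\bigr)$.

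The next step is to establish tightness of $(\>\alpha^N(\cdot))_N$ in the Skorohod space $D(\mathbb{R}_+,\Lambda)$ endowed with the weak topology. Tightness of the time marginals follows from the mass conservation $\sum_{j,k}(j+k)\alpha^N_{j,k}(t)=M_N/N\to U$, which gives a uniform first-moment bound and hence prevents any escape of mass to infinity in $\N^2$. Tightness of the paths comes from Aldous' criterion, using the fact that the jump intensities of any fixed coordinate $\alpha^N_{j,k}$ are uniformly bounded in $N$. At the same time one checks that the martingale terms vanish: the jumps of $\langle\>\alpha^N,f\rangle$ are of size $O(1/N)$ and occur at total rate $O(N)$, so the predictable quadratic variation of $M^N_f$ is of order $1/N$, hence $M^N_f(t)\to0$ in $L^2$ uniformly on compact time intervals.

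Passing to the limit in Dynkin's formula applied to $f=\1_{\{(j,k)\}}$ shows that any weak limit point $\>\alpha(\cdot)$ satisfies the integrated form of~\eqref{eq:kolmo1} with initial condition $\>\alpha_0$. The delicate step here is the nonlinear term $\lambda b^N(t)\alpha^N_{j-1,k}(t)$: weak convergence alone does not guarantee $b^N(t)\to b(t)$, since $b^N$ involves an infinite sum of coordinates, but the uniform first-moment bound coming from~\eqref{eq:Lambda} ensures that the family of probability measures $\{\>\alpha^N(t)\}_N$ is uniformly integrable, so no mass leaks to infinity and $\sum_j\alpha^N_{j,0}(t)\to\sum_j\alpha_{j,0}(t)$. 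Finally, the uniqueness of solutions to~\eqref{eq:kolmo1} in $\Lambda$, which is the content of the main theorem proved in Section~\ref{sec:thm}, identifies the limit and promotes subsequential convergence to convergence of the entire sequence. The main obstacle I anticipate is precisely this interplay between the infinite-dimensional state space and the nonlinearity through $b(t)$; the generator computation, tightness, and martingale vanishing are all routine once the conservation law~\eqref{eq:Lambda} has been exploited to control tails.
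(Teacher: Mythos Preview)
The paper does not give a self-contained proof of this proposition: it simply cites \cite{fricker2022mean} for the result and remarks that it could alternatively be obtained ``along the lines proposed in~\cite{DeFa}, which heavily rely on standard theoretical tools involving the convergence of generators (see~\cite{ethier2009markov}).'' Your outline is precisely that standard martingale-problem/generator-convergence argument, so you are in full agreement with the route the paper points to; the only remark worth making is that your final step appeals to the uniqueness established in Section~\ref{sec:thm}, which is a forward reference (and whose detailed argument there is carried out under the additional assumption~\textbf{H1}), so if you want a fully self-contained statement at this point you would need either to supply a direct Gronwall/Lipschitz uniqueness argument for~\eqref{eq:kolmo1} or to acknowledge the forward dependence explicitly.
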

\begin{proof}
This result was demonstrated in~\cite{fricker2022mean}. It could also be proved along the lines proposed in~\cite{DeFa}, which heavily rely on standard theoretical tools involving the convergence of generators (see~\cite{ethier2009markov}).

\end{proof}
\begin{rem}
    It will be shown in Section~\ref{sec:additional} (see~\eqref{eq:mass2bis}), that the mass conservation equation
    \[ 
     \sum_{j,k\geq 0}  (j+k)\alpha_{j,k}(t)=U, \quad \forall t\ge0,
\]
 is in fact intrinsic to the system~\eqref{eq:kolmo1}.
\end{rem}

\subsection{Mean-field equations viewed as a tandem of two queues} 

System \ref{eq:kolmo1} represents the joint distribution of the number of units in the tandem shown in Fig.~\ref{fig:TypicalObject}. The first queue stands for the reservations and is of  
$M(t)/M/\infty$ type, with arrival (resp. service) rate $\lambda b(t)$ (resp. $\mu$), while  the second queue containing the cars is of $./M/1/\infty$ type, with FIFO discipline and service rate $\lambda$. Here $b(t)$ is the probability that there is at least one car in the second queue.

\begin{figure}[!ht]
\begin{tikzpicture}[scale=0.8]



\node (middle1) at (2.8,0.3) {$\lambda\, b(t)$};
\draw (middle1);
\draw[->] (1.5,0) -- (4.2,0);

\draw[-] (5.5,-1.5) -- (5.5,1.5);

\foreach \y in {-1.5,-1.2,...,1.5}
\draw[-] (4.5,\y) -- (5.5,\y);

\draw[-] (4.5,1.5) -- (5.5,1.5);

\node (R) at (5,2) {$R$};
\draw (R);
\node (roR) at (5,-2) { $j$};
\draw (roR);


\node (middle2) at (5.8,1.5) {$\mu$};
\draw (middle2);
\draw[->] (5.8,0) -- (7.2,0);

\draw[-] (7.5,.5) -- (9.5,.5);
\draw[-] (7.5,-.5)-- (9.5,-.5);
\draw[-] (9.5,.5) -- (9.5, -.5);
\draw[-] (9,.5) -- (9, -.5);
\draw[-] (8.5,.5) -- (8.5, -.5);

\node (V) at (8.4,1.4) {$V$};
\draw (V);
\node (ro2) at (8.5,-1) {$k$};
\draw (ro2);


\node (middle3) at (9.5,0.9) {$\lambda$};
\draw (middle3);
\draw[->] (9.6,0) -- (11,0);


\end{tikzpicture}
\caption{Dynamics of the tandem. The first queue is of $M(t)/M/\infty$ type and the second queue is a simple $./M/1/\infty$ queue. }
\label{fig:TypicalObject}
  \end{figure}
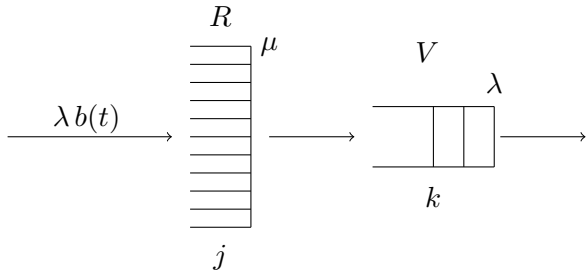
 Introduce the generating function
\[
F(x,y,t) \egaldef \sum_{j,k\ge0} \alpha_{jk}(t) x^j y^k,
\]
where $x,y$ are complex variables such that $0\le |x|,|y| \le1$. Then, \eqref{eq:kolmo1} is equivalent to the nonlinear functional equation
\begin{equation}\label{eq:S2}
\frac{\partial F(x,y,t)}{\partial t} + \left[\lambda b(t)(1-x)+\lambda \Bigl(1-\frac{1}{y}\Bigr)\right]
F(x,y,t) =
\mu(y-x)\frac{\partial F(x,y,t)}{\partial x} +\lambda \Bigl(1-\frac{1}{y}\Bigr) F(x,0,t),
\end{equation}
where $b(t) = 1- F(1,0,t)$.

\subsection{Results}\label{sec:main}
Equation~\eqref{eq:kolmo1} can be viewed as the limit of mean-field equations subject to the capacity constraint $j+k\le K$ as $K\to\infty$. In this case, we will show that the solution of~\eqref{eq:kolmo1} always exist, and that the stationary regime is reached exponentially fast, for any $\lambda,\mu>0$. The global situation is pictured in the next theorem.
\begin{thm}\label{thm:S1}
\begin{enumerate}\mbox{ } 
\item For each $t\ge0$ there exists a unique distribution $\>\alpha(t)$ satisfying the nonlinear system~\eqref{eq:kolmo1}. In addition, for any $\lambda,\mu>0$, as $t\to\infty$, the distribution
$\>\alpha(t)$ converges in ~$\Lambda$ (defined by~\eqref{eq:Lambda})  to the unique equilibrium point $\>\pi >\>0$ given by 
    \begin{equation}\label{eq:invar2}
        \pi_{j,k}=e^{-\rho\beta} \frac{(\rho\beta)^j}{j!}(1-\beta)\beta^k, \quad \forall j,k\in \N,
    \end{equation}
    where
    \begin{equation}\label{eq:param}
     \rho\egaldef\frac{\lambda}{\mu}, \ \mathrm{and} \ 
     \beta\egaldef \lim_{t\to\infty}b(t) = 
     \frac{U+\rho+1-\sqrt{(U+\rho+1)^2-4\rho U}}{2\rho} < 1,
    \end{equation}
  $\DD U= \lim_{N\to\infty}M_N/N$  being the constant introduced in Section~\ref{sec:intro}.
\item  There is a mass conservation of the form 
\begin{equation}\label{eq:mass2}
U=\sum_{j,k\geq 0} (j+k)\alpha_{j,k}(t), \quad 0\le t\le\infty,
    \end{equation}
  which, although somehow surprising, follows implicitly from the  functional equation~\eqref{eq:S2}. 

\item Assuming at $t=0$ all particles are reserved parking spaces, i.e. 
$\>\alpha(0)$ has its support in $\N\times \{0\}$, then, as $t\to\infty$,
\begin{equation}\label{eq:vt1}
    b(t) = \beta + \Oc(e^{-vt}), 
   \end{equation}
 where
 \begin{equation}\label{eq:vt2}
 v = \min[ \mu, \lambda\bigl(\sqrt{1}-\sqrt{\beta}\bigr)^2],
 \end{equation}
 and $1/v$ being viewed as the relaxation time of the system.
 \end{enumerate}
\end{thm}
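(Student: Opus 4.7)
The overall strategy is to exploit the queueing interpretation of Figure~\ref{fig:TypicalObject}: view \eqref{eq:kolmo1} as a \emph{linear} time-inhomogeneous tandem $M(t)/M/\infty\to ./M/1/\infty$ driven by a self-consistent Poisson intensity $\lambda b(t)$, thereby reducing a nonlinear problem to a scalar fixed point for $b(\cdot)$. For existence and uniqueness in part~(i), I would first freeze any admissible continuous $b:[0,\infty)\to[0,1)$ and show that the resulting linear master equation admits a unique probability solution $\vec{\alpha}^b(t)\in\Lambda$, e.g.\ by truncation to $\{j+k\leq K\}$ followed by a monotone limit $K\to\infty$. Setting $\Phi(b)(t):=1-\sum_j\alpha^b_{j,0}(t)$ and using the boundedness of $b$, I would then show that $\Phi$ is a contraction on $C([0,T],[0,1])$ equipped with a Gronwall-weighted sup norm for every $T>0$, which provides a unique global solution.

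For the equilibrium statement of part~(i), any stationary solution must satisfy $b(t)\equiv\beta$; this reduces \eqref{eq:kolmo1} to the balance equations of a Jackson tandem of $M/M/\infty$ (input rate $\lambda\beta$, service $\mu$) followed by $M/M/1$ (input rate $\lambda\beta$ by Burke, service $\lambda$), whose product form is exactly \eqref{eq:invar2}. The mass constraint $\rho\beta+\beta/(1-\beta)=U$ then yields the quadratic of \eqref{eq:param}, the minus sign being the unique root in $(0,1)$. Part~(ii) follows from multiplying \eqref{eq:kolmo1} by $j$ and summing: telescoping yields $m_1'(t)=\lambda b(t)-\mu m_1(t)$ for $m_1(t):=\sum_{j,k} j\,\alpha_{j,k}(t)$; multiplying by $k$ gives $m_2'(t)=\mu m_1(t)-\lambda b(t)$, so that $(m_1+m_2)'\equiv 0$ and the constant value is $U$ by the initial condition. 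The term-by-term differentiation will be justified from the uniform moment bounds obtained in part~(i).

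For the exponential convergence in part~(iii), the hypothesis $\mathrm{supp}\,\vec{\alpha}(0)\subset\N\times\{0\}$ forces $b(0)=0$ and an empty second queue. The first queue being $M(t)/M/\infty$, its distribution at time $t$ is the convolution of a $\mathrm{Poisson}\bigl(\int_0^t\lambda b(s)e^{-\mu(t-s)}\,ds\bigr)$ with a thinned image of $\vec{\alpha}(0)$ carrying thinning factor $e^{-\mu t}$; once $b(t)\to\beta$ exponentially, its deviation from $\mathrm{Poisson}(\rho\beta)$ decays at rate at least $\mu$. For the second queue I would set $\epsilon(t):=b(t)-\beta$ and linearize \eqref{eq:S2} around the product equilibrium $F_\infty(x,y)$, obtaining a linear Volterra equation for $\epsilon$ whose kernel is governed by the transition semigroup of an $M/M/1$ queue with load $\beta$. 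The spectral gap of this generator is the classical Karlin--McGregor value $\lambda(1-\sqrt{\beta})^2$, and combining the two contributions yields the stated $v=\min[\mu,\lambda(1-\sqrt{\beta})^2]$.

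The hardest step is obtaining this \emph{sharp} rate in part~(iii): a crude Lyapunov or relative-entropy argument (in the spirit of \cite{budhiraja2015local}) would deliver at best a rate proportional to $1-\beta$, whereas $(1-\sqrt{\beta})^2$ is strictly smaller and requires genuine spectral input from the $M/M/1$ generator. Rigorously closing this requires careful estimation of the Volterra convolution along the characteristics of \eqref{eq:S2}, and ensuring that no slower mode contributes to the decay of $\epsilon(t)$; the stochastic-dominance tools announced in the Appendix are likely to play a role in handling the nonlinear feedback between $\epsilon$ and the joint distribution.
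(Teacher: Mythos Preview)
Your treatment of part~(ii) and the identification of the equilibrium in part~(i) match the paper's in content (the paper derives $m_1'(t)=\lambda b(t)-\mu r(t)$ as equation~\eqref{eq:S2r} and gets conservation by the generating-function version of your telescoping). For existence and uniqueness of $\vec\alpha(t)$, however, you take a different route: you propose a Banach contraction on $b$ via a Gronwall-weighted norm, whereas the paper works not with $b(t)$ but with the output intensity $\delta(t)=\mu r(t)$ of the $M(t)/M/\infty$ queue, and builds a \emph{monotone} iteration $\delta_0\equiv 0,\ \delta_{n+1}$ obtained from a $M(t)/M/1$ queue with arrival rate $\delta_n$. Stochastic dominance (Lemma~\ref{lem:A1}) makes the scheme increasing and bounded, hence convergent; uniqueness then follows from the Lipschitz property~(iii) of the same lemma. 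Your contraction argument would also work, but it gives no structural information, while the paper's monotonicity is reused later.

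There is a genuine gap in your plan for part~(i): you identify the unique equilibrium but never argue that $\vec\alpha(t)\to\vec\pi$. The paper handles this by a compactness step on $\delta(t)$: any subsequential limit $\overline\delta$ of $\delta(t_k)$ must satisfy the stationary equation~\eqref{eq:stat} together with the mass constraint, which pins $\overline\delta$ to the unique admissible root~\eqref{eq:deltabar}; hence $\delta(t)$ converges, and a coupling with the time-homogeneous tandem (again via Lemma~\ref{lem:A1}) transfers this to $\vec\alpha(t)$. Your part~(iii) cannot substitute for this, since it is stated only for the special initial condition.

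For part~(iii) your linearization/spectral-gap strategy is a legitimate alternative in spirit, but the paper avoids it entirely. Instead it sandwiches $\delta(t)$ between the increasing scheme $\delta_n(t)$ of~\eqref{eq:scheme1} and a second, \emph{decreasing} scheme $\gamma_n(t)$ of~\eqref{eq:scheme2} started at $\gamma_0\equiv\delta$; at each stage the comparison queue is a genuine homogeneous $M/M/1$, so the classical transient estimate $\widetilde H_n(t)=1-\gamma_n/\lambda+O(t^{-3/2}e^{-c_n t})$ with $c_n=(\sqrt\lambda-\sqrt{\gamma_n})^2$ applies directly, and a bootstrap gives $a_{n+1}=\min(a_n,\mu,c_n)\to v$. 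This sidesteps exactly the difficulty you flag: there is no need to first enter a neighborhood of equilibrium or to control a nonlinear remainder, because the monotone bounds are global from $t=0$. Your linearization would have to confront the fact that the $M/M/1$ generator has continuous spectrum (the decay is $t^{-3/2}e^{-c t}$, not a pure exponential), and that the Volterra kernel $D(s,t)$ itself depends on $\delta$ through $A_t=\int_0^t\delta(s)\,ds$, so the feedback is not a simple perturbation.
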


This theorem shows in particular that the stationary distribution of \eqref{eq:S2} is given by~\eqref{eq:stat}, which has a product-form solution. This corresponds to the  network shown in Figure~\ref{fig:TypicalObject}, for $t=\infty$, with constant rates, whose joint-distribution is well-known to have a product-form (see~\cite{BCMP}).

\section{Proof of Theorem \ref{thm:S1}}\label{sec:thm}
 The line of argument (both of analytic and probabilist nature) consists in viewing the second queue of our tandem as a $M(t)/M/1/\infty$ queue with service rate~$\lambda$. Its arrival rate $\delta(t)$ defined in~\eqref{eq:delta} is theoretically computable via system~\ref{eq:sys}, and directly related to $b(t)$ introduced  in~\eqref{eq:S2}. In a second step, we show via an iterative scheme the convergence of 
$\delta(t)$ to a finite constant $\delta$, whose explicit form is given in Lemma~\ref{lem:pform}. The speed of convergence is obtained in Section~\ref{sec:rate}.

\subsection{Subsidiary equations}\label{sec:additional}
Beforehand, we establish three useful functional equations  which follow immediately from~\eqref{eq:S2}, keeping in mind that $x,y$ are complex variables with $|x|,|y| \leq 1$.
 
 \subsubsection{Equation for \texorpdfstring{$F(x,x,t)$}{}} 
Setting $G(x,t)\egaldef F(x,x,t)$, with $G(1,t)=1$, equation~\eqref{eq:S2} yields
\begin{align}\label{eq:G}
 \frac{\partial G(x,t)}{\partial t}+\lambda \left(b(t)-\frac{1}{x} \right) (1-x)  G(x,t)=\lambda \left( 1-\frac{1}{x} \right)F(x,0,t).
\end{align}
Then, dividing equation~\eqref{eq:G} by $1-x$ and letting~$x\to1$, we get 
\[
\frac{\partial }{\partial t} \frac{\partial }{\partial x}G(1,t) =0,
\]
or, equivalently,
\begin{equation}\label{eq:mass2bis}
      \frac{\partial G(1,t)}{\partial x} = U,
   \end{equation} 
thus proving the mass conservation~\eqref{eq:mass2}, stated in Section~\ref{sec:main}, and the point \emph{(ii)} of the theorem. 

\begin{rem}\label{rem:E2} It might be interesting to note that the conclusion~\eqref{eq:mass2bis} still holds if, in equation~\eqref{eq:S2}, $b(t)$ would be replaced by any \emph{meaningful} function $b(x,y,t)$ satisfying $b(1,1,t) =1-F(1,0,t)$.
\end{rem}
\subsubsection{Equation for \texorpdfstring{$F(1,y,t)$}{}}
Setting $A(y,t)\egaldef F(1,y,t)$, with $A(1,t)=1$ and  $A(0,t)=1-b(t)$, equation~\eqref{eq:S2} yields
\begin{align}\label{eq:A}
 \frac{\partial A(y,t)}{\partial t}+\lambda \left(1-\frac{1}{y} \right) A(y,t) + \mu(1-y)\frac{\partial F(1,y,t)}{\partial x} = 
 \lambda \left( 1-\frac{1}{y} \right)A(0,t).
\end{align}
We shall refer to this equation in Section~\ref{sec:deltat}.
\subsubsection{Eqution for \texorpdfstring{$F(x,1,t)$}{}}
 Setting $B(x,t)\egaldef F(x,1,t)$, with $B(1,t)=1$, and $\DD r(t)\egaldef\frac{\partial F(1,1,t)}{\partial x}$, equation~\eqref{eq:S2} yields immediately
\begin{align}\label{eq:B}
 \frac{\partial B(x,t)}{\partial t}+\lambda b(t)(1-x) B(x,t)=\mu(1-x)\frac{\partial B(x,t)}{\partial x}.
\end{align}
Dividing now equation~\eqref{eq:B}~by $1-x$ and letting $x\to1$, we obtain
\begin{align}\label{eq:S2r}
\frac{d r(t)}{dt}= \lambda b(t)-\mu r(t),
\end{align}
noting that $r(t)$ represents the mean number of customers in the first queue of the tandem at time~$t$. 

\begin{rem}\label{rem:E3} Equation \eqref{eq:S2r} seems quite natural, since it can be viewed as a fluid equation for the first queue of the tandem, which is a $M(t)/M/\infty$  queue with arrival rate $\lambda b(t)$ and service rate $\mu$. It can also be obtained directly from equation~\eqref{eq:S2}, by taking the partial derivative with respect to $y$, then substituting $x=y=1$ and using \eqref{eq:mass2bis}.
\end{rem}
The first queue is a $M(t)/M/\infty$ queue with arrival rate $\lambda b(t)$ and service rate $\mu$. 
\paragraph{Assumption H1}
 The first queue is supposed to be empty at time zero.
 
 Then the departure process of this queue is a non-homogeneous Poisson process, see e.g.~\cite[p.99]{GnKo}, with intensity 
$$\delta(t)\egaldef \int_0^t \lambda b(t-x) \mu e^{-\mu x}dx, \ t>0,$$
or, after solving~\eqref{eq:S2r} with the condition $r(0)=0$ 
(which corresponds to \textbf{H1}),
\begin{align}\label{eq:delta}
    \delta(t)=\lambda\mu \int_0^t b(s) e^{-\mu(t-s)} ds=\mu r(t), \ t>0.
\end{align}
For the sake of clarity, let us emphasize that in the rest of Section~\ref{sec:thm} the variable $r(t)$ will not appear explicitly, and we will focus on $\delta(t)$.

\subsection{A differential system for \texorpdfstring{$\delta(t)$}{}} \label{sec:deltat}
We suppose that at time $t=0$, the first queue 
$M(t)/M/\infty$ is empty. This state can be reached with probability one by a direct coupling argument, due to  the underlying Markovian evolution given by~\eqref{eq:kolmo1}. 
Then, from the previous section, it appears that the second queue of the tandem, denoted by $\Ta_2$, is of $M(t)/M/1$ type, with constant service rate~$\lambda$ and Poisson arrival intensity 
$\delta(t)$, defined by~\eqref{eq:delta}. 

Let $N(t)$ denote the random number of units (namely the number of cars) in~$\Ta_2$ at time~$t$, and its associated generating function
\[
\DD N(y,t)\egaldef\sum_{k\ge0}\Pb(N(t)=k) y^k.
\]
The original system thus behaves in the same stochastic way as the two following queues in interaction:
\begin{itemize}
    \item $\Ta_2$, just defined above;
    \item $\Ta_1$, which is of $M(t)/M/\infty$ type, which Poisson arrival rate $\lambda(1-N(0,t))$ and service rate~$\mu$.
\end{itemize}
We analyze $\Ta_2$ in more detail, starting from 
the forward Kolmogorov's equations, which give rise to the functional equation
\begin{align}\label{eq:A1}
 \frac{\partial N(y,t)}{\partial t}+\left(\lambda \left(1-\frac{1}{y} \right) +\delta(t)(1-y)\right) N(y,t) =
 \lambda \left(1-\frac{1}{y}\right)N(0,t),
\end{align}
where $\delta(t)$ is given by~\eqref{eq:delta}, keeping in mind the similarity between equations~\eqref{eq:A} and \eqref{eq:A1}.

The direct integration of \eqref{eq:A1}  leads to
\begin{align}\label{eq:Asol}
    \varphi(y,t)N(y,t)=N(y,0)+\lambda \left(1-\frac{1}{y}\right)\int_0^t \varphi(y,s)N(0,s)ds,
\end{align}
where 
\begin{align}
    \varphi(y,t)=\exp \left\{ \left(1-\frac{1}{y}\right) \int_0^t \bigl(\lambda - y\delta(s)\bigr) ds \right\}.
\end{align}
Setting, for notational convenience, $H(t)\egaldef \DD N(0,t)= 1-b(t)$, we have  from Cauchy's formula
$\DD H(t)=\frac{1}{2i\pi}\int_{\Lc}\frac{N(z,t)}{z}dz$, where $\Lc$ stands for a simple closed contour around~$0$.

Then, we get from~\eqref{eq:Asol} the following Volterra integral equation of the second kind
\begin{align}\label{eq:H}
    H(t)=\psi(t)+\lambda \int_0^t D(s,t)H(s) ds,
\end{align}
with 
\begin{equation*}
    \psi(t)=\frac{1}{2i\pi}\int_{\cal{L}}\dfrac{N(z,0)}{z\varphi(z,t)}dz, \qquad D(s,t)=\frac{1}{2i\pi}\int_{\cal{L}}\dfrac{\varphi(z,s)}{\varphi(z,t)} \dfrac{1-1/z}{z}dz.
\end{equation*}
It turns out that $D(s,t)$ can be expressed in terms of the modified Bessel functions
\[
I_n(z) = \frac{1}{2i\pi}\int_{|\omega|=1} \frac{\exp[(z/2)(\omega+1/\omega)]}{\omega^{n+1}} d\omega = 
\frac{1}{\pi}\int_0^\pi e^{z\cos(\theta)}\cos(n\theta)
d\omega, \quad n\in \mathbb{Z}.
\]
Setting $\DD A_t \egaldef \int_0^t \delta(s)ds$, we have exactly
\begin{equation}
    D(s,t) = e^{-\lambda(t-s)-(A_t-A_s)}
    \biggl(I_0\bigl(2\sqrt{\lambda(t-s)(A_t-A_s)}\bigr) -
\biggl(\frac{A_t-A_s}{\lambda(t-s)}\biggr)^{1/2}I_1\bigl(2\sqrt{\lambda(t-s)(A_t-A_s)}\bigr)\biggr).
\end{equation}
Hence, upon combining equations~\eqref{eq:S2r}, \eqref{eq:delta} and~\eqref{eq:H}, we have proved the following result.
\begin{prop}
The rate function $\delta(t)$ satisfies the differential system  
\begin{equation}\label{eq:sys}
\begin{cases}
    \DD \frac{d \delta(t)}{dt} + \mu \delta(t)& = \lambda\mu(1-H(t)), \quad \delta(0)=0, 
    \\[0.3cm]
     H(t) & =  \psi_\delta(t)+\lambda \int_0^t D(s,t)H(s) ds.
\end{cases}
\end{equation}
\end{prop}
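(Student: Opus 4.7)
My plan is to split the system~\eqref{eq:sys} into its two lines and deal with each in turn. Since both are essentially foreshadowed in the preceding subsection, the task is mostly to turn the manipulations there into a clean derivation.

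The first relation is immediate. Differentiating the closed form~\eqref{eq:delta} under the integral sign yields
\[
\delta'(t) = \lambda\mu\,b(t) - \mu\cdot\lambda\mu\int_0^t b(s)e^{-\mu(t-s)}ds = \lambda\mu\,b(t) - \mu\,\delta(t),
\]
and substituting $b(t) = 1 - N(0,t) = 1 - H(t)$ gives the stated ODE; the initial condition $\delta(0)=0$ is built in. This is nothing but $\mu$ times the fluid equation~\eqref{eq:S2r} applied to the first queue with $r(0)=0$.

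For the integral equation I would retrace the steps already sketched: multiplying the PDE~\eqref{eq:A1} by the integrating factor $\varphi(y,t)$ and integrating from $0$ to $t$ yields~\eqref{eq:Asol}. To extract $H(t)=N(0,t)$, Cauchy's formula on a small circle $\Lc$ around the origin --- on which $N(z,t)$ is holomorphic, being the generating function of a probability distribution --- gives $H(t) = \frac{1}{2\pi i}\oint_{\Lc} N(z,t)\,dz/z$. Dividing~\eqref{eq:Asol} by $z\,\varphi(z,t)$ and integrating termwise produces exactly~\eqref{eq:H}, with $\psi$ and $D(s,t)$ given by the displayed contour integrals.

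The real work is identifying $D(s,t)$ with the Bessel-function expression, and this is the only step I expect to require genuine attention. Setting $u \egaldef \lambda(t-s)$ and $v \egaldef A_t - A_s$, a direct computation gives
\[
\frac{\varphi(z,s)}{\varphi(z,t)} = e^{-u-v}\exp\!\left\{zv + \frac{u}{z}\right\},
\]
and the classical Laurent-series identity $\exp\{(\zeta/2)(\omega+1/\omega)\} = \sum_{n\in\Zb} I_n(\zeta)\omega^n$, applied with $\zeta=2\sqrt{uv}$ and $\omega = z\sqrt{v/u}$, rewrites this as
\[
\frac{\varphi(z,s)}{\varphi(z,t)} = e^{-u-v}\sum_{n\in\Zb} I_n(2\sqrt{uv})\,(v/u)^{n/2}\,z^n.
\]
Plugging into $D(s,t) = \frac{1}{2\pi i}\oint (1/z - 1/z^2)\,[\varphi(z,s)/\varphi(z,t)]\,dz$ and picking off the residue at $z=0$ --- that is, the $z^0$ and $z^1$ coefficients of the series --- produces the claimed $e^{-u-v}[I_0(2\sqrt{uv}) - \sqrt{v/u}\,I_1(2\sqrt{uv})]$. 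The one delicate point to verify here is that the Laurent expansion can be integrated term by term on the contour; this is legitimate because the series converges absolutely and uniformly on any circle $|z|=r>0$, so Fubini applies and the identification is direct.
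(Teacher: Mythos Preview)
Your proposal is correct and follows essentially the same route as the paper: the first line of~\eqref{eq:sys} is obtained by combining~\eqref{eq:S2r} and~\eqref{eq:delta} (your direct differentiation of~\eqref{eq:delta} is the same computation), and the second line is precisely the Volterra equation~\eqref{eq:H} derived from~\eqref{eq:Asol} via Cauchy's formula. Your detailed verification of the Bessel-function form of $D(s,t)$ is an extra that the paper merely asserts, but the computation you give is correct.
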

\begin{rem}
  The first equation of system~\eqref{eq:sys} shows that
$\DD
\frac{d \delta(t)}{dt} + \mu \delta(t) \leq \lambda\mu$, with $\delta(0)=0$, whence 
\[
\delta(t) \leq \lambda(1-e^{-\mu t}).
\]
\end{rem} 

In the sequel, and in particular in the proof of the forthcoming lemma, to emphasize the functional dependence on $\delta(t)$, we will add the subscript $\delta$ to the concerned functions,~e.g. $H_\delta, \psi_\delta,D_\delta,$~etc. This convention also applies to \eqref{eq:sys}.  
\begin{lem}\label{lem:dlimit}
The integro-differential system~\eqref{eq:sys} has a unique solution 
$\delta(\cdot)$ with
\begin{equation}\label{eq:dlimit}
    \delta(0)= 0, \quad\mathrm{and} \quad \delta \egaldef \lim_{t\to\infty}\delta(t)
\end{equation}
exists and is a finite positive constant.
\end{lem}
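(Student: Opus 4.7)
The plan is to split the proof into three successive stages: local well-posedness of the integro-differential system \eqref{eq:sys} on every bounded interval, global extension, and existence-plus-positivity of the long-time limit.

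For local existence and uniqueness on $[0,T]$, I would apply Banach's fixed-point theorem to the map $\Phi:C([0,T])\to C([0,T])$ that sends a candidate rate $\delta$ to the function obtained by first solving the Volterra equation of the second kind from \eqref{eq:sys} for $H_\delta$ (which is uniquely solvable by Neumann iteration, since the kernel $\lambda D_\delta(s,t)$ is continuous and bounded on the compact triangle $\{0\le s\le t\le T\}$---a fact one reads off the Bessel representation of $D$ combined with the trivial bound $\delta(s)\le\lambda$), and then feeding $H_\delta$ into the linear first-line ODE with $\delta(0)=0$. Equipping $C([0,T])$ with a weighted sup-norm $\|f\|_\beta=\sup_{t}e^{-\beta t}|f(t)|$ and tracking the Lipschitz chain $\delta\mapsto D_\delta\mapsto H_\delta\mapsto\Phi(\delta)$ makes $\Phi$ a contraction for $\beta$ large enough. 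Global existence on $[0,\infty)$ then follows from the \emph{a priori} bound $\delta(t)\le\lambda(1-e^{-\mu t})<\lambda$ already noted in the remark following \eqref{eq:sys}, which precludes blow-up.

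The heart of the lemma---and its principal obstacle---is the existence of $\delta:=\lim_{t\to\infty}\delta(t)$. I would implement the iterative scheme announced in the introduction to Section~\ref{sec:thm}: starting from the trivial envelopes $\delta^-_0\equiv 0$ and $\delta^+_0\equiv\lambda$ for $\delta(t)$, each iteration feeds the current envelopes into \eqref{eq:sys} and, using the stochastic-dominance results for time-inhomogeneous birth-and-death processes collected in the Appendix, compares the queue $\Ta_2$ against tractable time-homogeneous $M/M/1$ queues run at the envelope arrival rates, thereby producing improved bounds on $H(t)$ and hence on $\delta(t)$. Iterating tightens the envelope around $\delta(t)$, and the fixed point of the recursion---read off from the stationary balance combined with the intrinsic mass conservation \eqref{eq:mass2}, which yields the quadratic $\rho h^2-(\rho+U+1)h+U=0$ in the variable $h=\delta/\lambda$---gives the admissible root $\lambda\beta$ of Theorem~\ref{thm:S1}. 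Positivity is automatic, since $\delta=0$ would force $r(t)\to 0$ and hence $\Eb[N(t)]\to U>0$, a contradiction.

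The main difficulty lies entirely in this last stage: the queue $\Ta_2$ is driven by a rate whose very convergence is the object of the lemma, so a naive passage to the limit is circular. The stochastic-dominance machinery of the Appendix is precisely what breaks this circularity by allowing one to bracket the non-autonomous queue by two stationary references, while the mass conservation \eqref{eq:mass2}---intrinsic to the functional equation \eqref{eq:S2} as established in Section~\ref{sec:additional}---is indispensable to single out the unique admissible accumulation point from the quadratic balance equation.
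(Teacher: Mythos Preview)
Your existence-and-uniqueness stage is correct but takes a different route from the paper: you set up a Banach contraction on $C([0,T])$ with a weighted norm, whereas the paper builds the solution via the \emph{one-sided} monotone iteration~\eqref{eq:scheme1} starting from $\delta_0\equiv 0$, proving by induction (using the stochastic dominance of Lemma~\ref{lem:A1}(ii)) that $\delta_n(t)\uparrow\delta(t)$ pointwise, and invokes the Lipschitz estimate~\eqref{eq:A5} only for uniqueness. Either argument is fine; yours is more standard analytic machinery, the paper's exploits the monotone structure and gets existence and an approximation scheme simultaneously.

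The long-time limit is where your plan both diverges from the paper and develops a real gap. Your upper envelope $\delta_0^+\equiv\lambda$ feeds into $\Ta_2$ a \emph{critical} $M/M/1$, so $H_\lambda(t)\to 0$ and hence $\delta_1^+(t)\to\lambda$ as $t\to\infty$: the upper envelope does not contract at infinity, and iterating gives no improvement. Worse, if one computes the map induced on stationary values (constant $\gamma\mapsto\lambda(1-H_\gamma(\infty))=\gamma$ for any $\gamma<\lambda$), it is the identity---every subcritical value is a fixed point---so the quadratic cannot come out of the envelope recursion alone. The mass conservation is not an output of the iteration; it is an external constraint that has to be imposed separately to single out the correct stationary point.

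This is exactly how the paper proceeds, but without envelopes. It inserts a separate lemma (Lemma~\ref{lem:pform}) inside the proof, establishing that any reachable stationary distribution of the tandem has the product form~\eqref{eq:pform} with $\mu p=\lambda q$. It then argues that for any sequence $t_k\to\infty$ along which $\delta(t_k)\to\overline\delta$, the limit of~\eqref{eq:S2} is the stationary equation~\eqref{eq:stat} with $\delta$ replaced by $\overline\delta$; the product form plus the mass conservation~\eqref{eq:mass2bis} then force $U=\overline\delta/\mu+\overline\delta/(\lambda-\overline\delta)$, a quadratic with a single admissible root~\eqref{eq:deltabar}. Since every subsequential limit equals that root, the full limit exists. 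The decreasing scheme~\eqref{eq:scheme2} that resembles your upper envelope is launched only afterwards, from $\gamma_0=\delta<\lambda$, and is used solely for the rate of convergence in Proposition~\ref{prop:speed}, once $\delta$ is already in hand.
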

\begin{proof}
According to a method similar to that proposed in~\cite{DeFa}, we introduce the following iteration scheme:
\begin{equation}\label{eq:scheme1}
  \begin{cases}
      \delta_0(t)=0, \quad \forall t\ge 0, \\[0.2cm]
   \DD \frac{d \delta_{n+1}(t)}{dt} + \mu \delta_{n+1}(t) =
   \lambda\mu (1-H_{\delta_n}(t)),  \\[0.3cm]
   \delta_{n+1}(0) = 0, \quad \forall n\ge 0, \\[0.2cm]
   \DD H_{\delta_n}(t) =  \psi_{\delta_n}(t)+\lambda \int_0^t D_{\delta_n}(s,t)H_{\delta_n}(s) ds .
  \end{cases} 
  \end{equation}
We will show that this scheme is increasing.
For each $n\ge0$, let $\mathcal{Q}_n$ denote the $M(t)/M/1/\infty$ queue with FIFO service discipline, arrival rate $\delta_n(t)$ and service intensity 
$\lambda$. Then, the probability for  $\mathcal{Q}_n$ to be empty at time~$t$ is equal to $H_{\delta_n}(t)$, the unique solution of the Volterra integral equation appearing in system~\eqref{eq:scheme1}.
 Using the simple stochastic monotonicity proved in Appendix~\ref{sec:A1}, we shall argue by induction
on $n$, assuming that all $\mathcal{Q}_n$'s have the same initial
conditions [this argument is rendered possible by the third equation in \eqref{eq:scheme1}].

Suppose $\delta_n(t)\ge \delta_{n-1}(t)$, which is in particular true for~$n=1$. Then, by property~(ii) of Lemma~\ref{lem:A1}, we have the inequality
$H_{\delta_n}(t)\le H_{\delta_{n-1}}(t)$, which, after rewriting the second equation of~\eqref{eq:scheme1} as
\begin{equation}\label{eq:dnt}
\delta_{n+1}(t) = \lambda\mu \int_0^t e^{\mu(s-t)}(1-H_{\delta_n}(s))ds,
\end{equation}
yields immediately $\delta_{n+1}(t)\ge\delta_n(t)$. Hence, the uniformly bounded sequence $\{\delta_n(t),n\geq 0 \}$ (resp. $\{H_{\delta_n}(t),n\geq 0\}$) is, for each fixed~$t$, non-decreasing (resp. non-increasing), and the functions
\begin{alignat*}{2}
\delta(t) & = \lim_{n\rightarrow\infty}\delta_n(t)  \quad \text{and}
 \quad H_{\delta}(t) &  = \lim_{n\rightarrow\infty}H_{\delta_n}(t) 
\end{alignat*}
 do exist and satisfy~\eqref{eq:sys}.

After having shown the existence of a solution to system~\eqref{eq:sys}, we
are left with the problem of uniqueness. In fact, considering the
first equation of \eqref{eq:sys} and using standard results on integral
equations, this uniqueness is straightforward, since by~(iii) of
Lemma~\ref{lem:A1}, $H_{\delta}(\cdot)$ satisfies a Lipschitz condition
with respect to $\delta(\cdot)$. 

\begin{lem} \label{lem:pform} Any \emph{reachable} stationary distribution
of the tandem driven by equation~\eqref{eq:S2} has a product form $F(x,y)\DD \egaldef\lim_{t\to\infty} F(x,y,t)$ in the sense that 
\begin{equation}\label{eq:pform}
F(x,y) = e^{p(x-1)}\,\frac{1-q}{1-qy},
\end{equation}
\end{lem}
with $p>0$, $0\le q<1$ and $\mu p = \lambda q$.
\begin{proof}
  Letting $t\to\infty$ in \eqref{eq:S2}, we obtain the following stationary Kolmogorov's equations of a \emph{quasi-reversible} system, which, after setting 
  $F(x,y)\egaldef F(x,y,\infty)$, have the form
\begin{equation}\label{eq:stat}
\left(\delta (1-x) + \lambda \biggl(1-\frac{1}{y}\biggr)\right) F(x,y) = \mu(y-x)\frac{\partial F(x,y)}{\partial x} + 
\lambda \biggl(1-\frac{1}{y}\biggr)F(x,0).
\end{equation}
  Then, inserting \eqref{eq:pform} into~\eqref{eq:stat},
we obtain immediately the necessary relations
\begin{equation}\label{eq:stat1}
\delta = \mu p = \lambda q = \lambda(1-F(1,0)),
\end{equation}
and the proof of Lemma~\ref{lem:pform} is concluded.
\end{proof}
To proceed further with the proof of Lemma~\ref{lem:dlimit}, we use the mass conservation equation~\eqref{eq:mass2bis}, which says that the total mean number of units in the system is equal to some given bounded constant~$U$, for all~$t\ge0$. Hence, any possible reachable stationary regime corresponds necessarily to a positive recurrent process. Although we do not yet know the behavior of $\delta(t)$ when $t\to\infty$, we see that, for any positive increasing sequence $\{t_k,k\ge0\}$, with $\DD\lim_{k\to\infty}t_k=\infty$ and 
$\DD\lim_{k\to\infty}\delta(t_k)=\overline{\delta}$, the equilibrium  equation~\eqref{eq:stat} is obtained as the limit of~\eqref{eq:S2}, where $\delta$ is replaced by $\overline{\delta}$. In addition, 
$\overline{\delta}$ must satisfy~\eqref{eq:stat1}, together with the relation
\[
U= \frac{\partial F(1,1)}{\partial x} + \frac{\partial F(1,1)}{\partial y} = p + \frac{q}{1-q},  
\]
or, equivalently,
\[
U =\frac{\overline{\delta}}{\mu} + \frac{\overline{\delta}}{\lambda-\overline{\delta}},
\]
which yields a second degree equation for $\overline{\delta}$ having the unique admissible root
\begin{equation}\label{eq:deltabar}
\overline{\delta} = \frac{\mu\bigl(U+\rho+1-\sqrt{(U+\rho+1)^2-4\rho U}\bigr)}{2}.
    \end{equation}
Therefore $\overline{\delta}$ does not depend on the choice of the sequence~$\{t_k\}$, hence the existence of the limit
\[\delta = \lim_{t\to\infty}\delta{(t)} = \overline{\delta},
\]
 given by \eqref{eq:deltabar}.
The proof of Lemma~\ref{lem:dlimit} is concluded.
\end{proof}

The preceding arguments show that we are entitled to analyze the equations \eqref{eq:kolmo1}, \eqref{eq:S2}, \emph{treating 
$\lambda b(t)$ as if it were the exogenous
function (see \eqref{eq:S2r}, \eqref{eq:delta})
\[
\lambda b(t)= \delta(t) + \frac{1}{\mu}\frac{d\delta(t)}{dt},
\]
where $\delta(t)$ satisfies~\eqref{eq:sys}}.
After doing this, equations~\eqref{eq:kolmo1} ---describing the evolution of the process~$\xi(t)$ associated with the original  tandem queue network shown in Figure~\ref{fig:TypicalObject})--- become standard forward Kolmogorov's equations, whose existence and uniqueness of solution follow at once from the general theory (see e.g. \cite{FEL,ReLe}), and they satisfy the condition
\[
\sum_{j,k}\alpha_{j,k}(t)=1, \quad \forall t<\infty.
\]
To study the steady-state behavior of~$\xi(t)$ when $t\to\infty$, we start by claiming, on the basis of equations~\eqref{eq:mass2} and \eqref{eq:mass2bis}, that $\xi(t)$ is necessarily ergodic, which implies in particular~$\delta<\lambda$. 

Moreover, the invariant measure of $\xi(t)$ is expected to coincide with  the invariant measure of the random walk, 
say $\widetilde{\xi}(t)$, obtained just replacing the function 
$\delta(t)$ by the constant~$\delta$. This statement can be established by a simple coupling argument, relying on the continuity of~$\delta(t)$. Indeed, for all~$\varepsilon>0$, there exists $T_{\varepsilon}$ such that
\begin{equation}\label{eq:eps}
|\delta(t)-\delta | < \epsilon, \quad \forall t\geq T_{\epsilon}.
\end{equation}
Taking now $\widetilde{\xi}(0) = \xi(T_{\epsilon})$, the result
follows directly from inequality \eqref{eq:A5} and Section~\ref{sec:A1}.

 On the other hand, $\widetilde{\xi}(t)$ corresponds to
a standard Jackson network known to have the product-form stationary distribution (see Lemma~\ref{lem:pform})
\[
 \pi_{j,k}=e^{-(\delta/\mu)}\,\frac{(\delta/\mu)^j}{j!}
 (1-\beta)\beta^k, \quad \forall j,k\in \N,  
\]
and one checks directly  from~\eqref{eq:deltabar} that $\DD \beta=\frac{\delta}{\lambda}$ is  given by~\eqref{eq:param}. 

This concludes the proof of the point \emph{(i)} of Theorem~\ref{thm:S1}.


\subsection{Speed of convergence of \texorpdfstring{$\delta(t)$}{}}\label{sec:rate}
The following proposition provides an estimate of the speed of convergence of $\delta(t)$ toward~$\delta$. 
\begin{prop}\label{prop:speed}\mbox{}
As $t\to\infty$, $\delta(t) = \delta + \Oc(e^{-vt})$, with $v=\min\bigl(\mu,\bigl(\sqrt{\lambda} - \sqrt{\delta}\bigr)^2\bigr)$.
\end{prop}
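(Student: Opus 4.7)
The plan is to exploit the structure of system~\eqref{eq:sys} to reduce the claim to an exponential convergence estimate for $H(t)$, and then to recover the sharp rate by combining a stochastic coupling argument with the analytic form of the Bessel kernel appearing in~\eqref{eq:H}.

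Solving the first line of~\eqref{eq:sys} with $\delta(0)=0$ gives the explicit convolution
\[
\delta(t) = \lambda\mu\int_0^t e^{-\mu(t-s)}(1-H(s))\,ds.
\]
Lemma~\ref{lem:dlimit} combined with~\eqref{eq:stat1} yields $H_\infty \egaldef \lim_{t\to\infty}H(t) = 1-\delta/\lambda$, and subtracting the stationary contribution gives
\[
\delta(t) - \delta = \lambda\mu\int_0^t e^{-\mu(t-s)}(H_\infty - H(s))\,ds - \delta\, e^{-\mu t}.
\]
A routine convolution estimate then shows that $|H(s)-H_\infty| = \Oc(e^{-v_1 s})$ with $v_1 \egaldef (\sqrt{\lambda}-\sqrt{\delta})^2$ implies $|\delta(t)-\delta| = \Oc(e^{-\min(\mu,v_1)t})$, which is exactly Proposition~\ref{prop:speed}. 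The task is therefore to establish this exponential rate for the empty probability of the queue $\mathcal{T}_2$.

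I would proceed in two stages. First, fix $\epsilon>0$ small enough that $\delta+\epsilon<\lambda$, and use~\eqref{eq:eps} to choose $T_\epsilon$ with $|\delta(s)-\delta|<\epsilon$ for $s\geq T_\epsilon$. By the stochastic monotonicity of Lemma~\ref{lem:A1}, the time-inhomogeneous queue $\mathcal{T}_2$ on $[T_\epsilon,\infty)$ is sandwiched between two stationary $M/M/1$ queues with loads $(\delta\pm\epsilon)/\lambda$, whose empty probabilities are well known to relax to equilibrium at the classical spectral rate $(\sqrt\lambda-\sqrt{\delta\pm\epsilon})^2$. This already yields the weaker bound $|H(t)-H_\infty|=\Oc(e^{-v_\epsilon t})$ with $v_\epsilon=(\sqrt\lambda-\sqrt{\delta+\epsilon})^2$, which, plugged into the convolution above, proves at once that $\delta(t)-\delta$ decays exponentially, though at an $\epsilon$-dependent rate.

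The second stage removes this $\epsilon$-loss by a bootstrap based on the Volterra equation~\eqref{eq:H}. Once $|\delta(s)-\delta|=\Oc(e^{-v'_\epsilon s})$ is known, one has $A_t-A_s = \delta(t-s) + \Oc(1)$ uniformly, and the standard Bessel asymptotics $I_n(z)\sim e^z/\sqrt{2\pi z}$ applied to $D(s,t)$ yield
\[
D(s,t) = \Oc\!\left(\frac{e^{-(\sqrt{\lambda}-\sqrt{\delta})^2(t-s)}}{\sqrt{t-s}}\right), \qquad t-s\to\infty.
\]
Substituting this sharp kernel decay into the linearization of~\eqref{eq:H} for $h(t)=H(t)-H_\infty$ and iterating (or applying a Volterra--Gronwall estimate) gives $|h(t)|=\Oc(e^{-v_1 t})$, concluding the proof. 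The main obstacle, as I see it, is precisely this bootstrap: the elementary coupling of stage one is too crude to reproduce the exact gap $(\sqrt\lambda-\sqrt\delta)^2$, so extracting the sharp constant requires leveraging the explicit analytic form of $D(s,t)$ while carefully handling the integrable $1/\sqrt{t-s}$ singularity that appears in the convolution.
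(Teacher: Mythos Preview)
Your reduction to controlling $H(t)-H_\infty$ is correct, but Stage~1 does not deliver the exponential bound you claim. Sandwiching $\mathcal{T}_2$ on $[T_\epsilon,\infty)$ between two homogeneous $M/M/1$ queues with arrival rates $\delta\pm\epsilon$ gives
\[
1-\frac{\delta+\epsilon}{\lambda}+\Oc\bigl(e^{-c_\epsilon^- t}\bigr)\ \le\ H(t)\ \le\ 1-\frac{\delta-\epsilon}{\lambda}+\Oc\bigl(e^{-c_\epsilon^+ t}\bigr),
\]
so that $|H(t)-H_\infty|\le \epsilon/\lambda + \Oc(e^{-v_\epsilon t})$, \emph{not} $\Oc(e^{-v_\epsilon t})$: the two bounding queues relax to \emph{different} equilibria, $\epsilon/\lambda$ apart, and that additive gap never vanishes at fixed~$\epsilon$. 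Feeding this into your convolution formula yields only $|\delta(t)-\delta|\le \epsilon + \Oc(e^{-\min(\mu,v_\epsilon)t})$, which is no better than~\eqref{eq:eps}. Stage~2 then becomes circular: the crucial input $A_t-A_s=\delta(t-s)+\Oc(1)$ requires $\int_0^\infty|\delta(s)-\delta|\,ds<\infty$, i.e.\ an integrable (in particular exponential) rate for $\delta(t)-\delta$, which is exactly what you have not yet established.

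The paper circumvents this by replacing the $\epsilon$-sandwich with an iterative scheme~\eqref{eq:scheme2} that brackets $\delta(t)$ from above by a monotone sequence $\gamma_n(t)\searrow\delta(t)$, initialized at the \emph{constant} $\gamma_0(t)\equiv\delta$. Because the fixed point of the map $\gamma\mapsto\lambda(1-\widetilde H_\gamma(\infty))=\gamma$ is hit exactly, every $\gamma_n(t)$ has the \emph{same} limit $\delta$, and the classical $M/M/1$ spectral estimate~\eqref{eq:asmu} then propagates the rate $\min(\mu,(\sqrt\lambda-\sqrt\delta)^2)$ through the iteration without any $\epsilon$-loss. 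Your Bessel-kernel bootstrap could likely be made to work, but it needs a genuine seed (some \emph{a priori} exponential rate, however crude) to start from; the coupling you propose does not provide one.
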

\begin{proof}
Let $\widetilde{\Qc}_n, n\ge0,$ be the following sequence of ergodic
$M/M/1/\infty$ queues with the following parameters for each $\widetilde{\Qc}_n$:
\begin{itemize}
\item the service rate is equal to 
$\lambda$; 
\item the intensity of the Poisson arrival process has the constant value 
$\gamma_n<\lambda$.
\end{itemize}
Let $\widetilde{H}_n(t)$ be the probability for
$\mathcal{\widetilde{Q}}_n$ to be empty at time $t$. It is well-known that $\widetilde{H}_n(t)$ can be expressed in terms of Bessel integrals (see e.g. \cite[p.~23]{Takacs}). Moreover, setting
\[c_n\egaldef\bigl(\sqrt{\lambda} - \sqrt{\gamma_n}\bigr)^2,
\]
we have, as $t\rightarrow\infty$, the
classical estimate, valid here since $\lambda>\alpha$ (see
e.g. \cite[p.~107]{ASMU}),
\begin{equation}\label{eq:asmu}
\widetilde{H}_n(t) = 1- \dfrac{\gamma_n}{\lambda} + 
\widetilde{h}_nt^{-3/2} e^{-c_nt} + o\bigl(t^{-3/2}e^{-c_nt}\bigr),
\end{equation}
 where $\widetilde{h}_n$ is a uniformly bounded constant depending on
the initial conditions. 

\begin{lem}\label{lem:HS}
 Let $f(\cdot)$ be an integrable function of $t$ on 
 $\mathbb{R}_+$, such that 
 $\DD l=\lim_{t\to\infty}f(t)$ exists. Then 
\begin{equation}\label{eq:conv}
\lim_{t\to\infty}\int_0^t \mu e^{\mu(s-t)} f(s)ds = l.
\end{equation} 
\end{lem}
\begin{proof}
 Just splitting the integral into the sum  
 $\DD\int_0^T +\int_T^t$, where, for a given $\varepsilon$, $T$ is such that
 \[ 
 |l-f(t)|< \varepsilon ,\quad \forall t\ge T,
 \]
the proof of the lemma is immediate by letting $t\to\infty$, and then $\varepsilon\to 0$.
\end{proof}

Let us consider now the following scheme
\begin{equation}\label{eq:scheme2}
  \begin{cases}
      \gamma_0(t)= \gamma \in]0, \lambda[,\quad \forall t\ge 0, 
      \\[0.2cm]
   \DD \frac{d \gamma_{n+1}(t)}{dt} + \mu \gamma_{n+1}(t) = 
   \lambda\mu(1-H_{\gamma_n}(t)),  \\[0.3cm]
   \gamma_{n+1}(0) = 0, \quad \forall n\ge 0,\\[0.2cm]
   \DD H_{\gamma_n}(t) =  \psi_{\gamma_n}(t)+\lambda \int_0^t D_{\gamma_n}(s,t)H_{\gamma_n}(s) ds,
  \end{cases} 
  \end{equation}
which differs from \eqref{eq:scheme1} only by its first equation, but this is not a minor point! 

The last equation of \eqref{eq:scheme2} can be seen as emanating from a $M(t)/M/1$ ergodic queue, which we suppose to be empty at time~$0$. Then, by Takács formula (see~\cite[Th.1, p. 23]{Takacs}), we can write 
\[
\widetilde{H}_n(t)\ge 1-\frac{\gamma_n}{\lambda}, \quad \forall  \gamma_n\le\lambda.
\]
Using now~Lemma~\ref{lem:HS} and~\eqref{eq:dnt} with $\delta_n$ replaced by $\gamma_n$, we  get at once $\gamma_1(t)\le\gamma(1-e^{-\mu t})\le \gamma$, which implies by induction that $\gamma_n(t)$ is monotone decreasing in $n$, and 
\[
\lim_{n\rightarrow\infty}\gamma_n(t) =\delta(t), \quad \mathrm{with} \quad
\delta_n(t)\leq\delta(t)\leq\gamma_n(t), \quad \forall t,n\geq 0.
\] 
Suppose, and this is true for $n=0$,
\[
\gamma_n(t) = \gamma_n + \Oc(e^{-a_n t}).
\]
Using the monotonicity properties already mentioned and the estimate~\eqref{eq:asmu}, we can write
\[
H_{\gamma_n}(t) = 1- \dfrac{\gamma_n}{\lambda} + 
\Oc\bigl(\max(e^{-a_nt},t^{-3/2}e^{-c_n t})\bigr).
\]
Then, the integrated form 
\begin{equation}\label{eq:gt}
\gamma_{n+1}(t) = \lambda\mu \int_0^t e^{\mu(s-t)}(1-H_{\gamma_n}(s))ds
\end{equation}
 yields (some details are omitted),   
\begin{align*}
   \gamma_{n+1}(t) = \gamma_n + \Oc(e^{-a_{n+1}t})
\end{align*}
where 
\[
a_{n+1} = \min{(a_n,v_n)}, \quad \mathrm{with} \ v_n = \min{(\mu,c_n)}.
\]
Choosing now $\gamma_0(t)=\delta, \forall t\ge 0$, in the first equation of~\eqref{eq:scheme2}, we have 
\[
\lim_{n\to\infty} a_n= \lim_{n\to\infty}\min(\mu, (\sqrt\lambda -\sqrt\gamma_n)^2) = (\sqrt\lambda -\sqrt\delta)^2,
\]
concluding the proof of Proposition~\ref{prop:speed}. 
\end{proof}
 
 Applying now Lemma~\ref{lem:HS} to the integral form obtained  from~\eqref{eq:sys},
 \[
\delta(t) = \lambda\mu \int_0^t e^{\mu(s-t)}(1-H_{\delta}(s))ds,
 \] 
 we get $\delta=\lambda \beta$.
 Hence, Proposition~\ref{prop:speed} is clearly equivalent to point \emph{(iii)} of  Theorem~\ref{thm:S1}, the proof of which is complete.
\hfill  $\blacksquare$

\section{Two other related systems} \label{sec:others}
In this section, we present two systems in the same context with capacity constraints. Both  can be  analyzed via the same methods, and the main results are stated without detailed proofs.

\subsection{Model 2: mean-field limit viewed as a single  \texorpdfstring{$M(t)/M/1/K$}{}  queue with finite capacity \texorpdfstring{$K$}{}}
This model is the simplest one among the car-sharing class  (see e.g.~\cite{fricker2017equivalence}), with no parking space reservation. It is particularly meaningful for bike-sharing systems, which do not permit reservation.
There are $M_N$ cars moving among $N$ stations of capacity $K$. A car leaves the station at rate $\lambda$ and travels for a random time  (exponentially distributed with  parameter 
$\mu$). Then, it returns to a uniformly chosen station, if possible, or else sets off on a new journey.

As $N\to \infty$ with $\DD U=\lim_{N\to \infty}M_N/N$, the usual empirical distribution tends to a deterministic dynamical system $\>\alpha(t)$ described by  the following equations, for $0\leq j\leq K,  \,t\ge0$,
\begin{equation}\label{eq:kolmo2}
  \frac{d\alpha_j(t)}{dt}+\left[\lambda\1_{\{j>0\}} + 
  \mu (U-a(t))\1_{\{j<K\}}\right]\alpha_j(t) = 
  \mu (U-a(t))\alpha_{j-1}(t) +\lambda \alpha_{j+1}(t) \1_{\{j<K\}},
\end{equation}
where  $\DD a(t)= \sum_{j=0}^K j\alpha_j(t)$.
The underlying time-inhomogeneous Markov process is a 
$M(t)/M/1/K$ queue with service rate $\lambda$ and Poisson arrival rate $\mu(U-a(t))$, where $a(t)$ is the mean number of customers in the queue at time~$t$.

Setting 
$\DD Q(z,t) = \sum_{j=0}^K \alpha_j(t) z^j$, for an arbitrary  
 complex variable $z$ and $t\ge0$, we get the nonlinear functional equation
\begin{equation}\label{eq:S1}
\frac{\partial Q(z,t)}{\partial t} + \left[\mu (U-a(t))(1-z)+\lambda\Bigl(1-\frac{1}{z}\Bigr)\right]Q(z,t) = \lambda\Bigl(1-\frac{1}{z}\Bigr) Q(0,t)
+\mu (U-a(t))\alpha_K(t)z^K(1-z),
\end{equation}
with $\DD a(t)= \frac{\partial Q(1,t)}{\partial z}$.

\begin{thm}\label{thm:S2}
For $t\ge0$, there exists a unique distribution $\>\alpha(t)$ satisfying the nonlinear system~\eqref{eq:kolmo2}. In addition, for any $\lambda,\mu>0$, there is a unique equilibrium point
$\DD\lim_{t\to\infty}\>\alpha(t)=\>\pi >\>0$ given by 
\[ 
\pi_j = \frac{(1-\beta)\beta^j}{1-\beta^{K+1}}, \quad 0\le j\le K,
\]
where 
\[
\beta=\lim_{t\to\infty} \frac{\mu}{\lambda}\bigl(U-a(t)\bigr) = 
\frac{\mu}{\lambda}\Bigl(U-\sum_{j=0}^K j\pi_j\Bigr)
\]
is the unique solution of the fixed point equation
\[
\beta = \frac{\mu}{\lambda}\biggl(U- \frac{\beta}{1-\beta}+
\frac{(K+1)\beta^{K+1}}{1-\beta^{K+1}}\biggr).
\]
\end{thm}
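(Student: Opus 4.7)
The plan is to mirror the strategy used for Theorem~\ref{thm:S1}, exploiting the simplification afforded by the finite state-space $\{0,1,\ldots,K\}$. Writing $\eta(t)\egaldef \mu(U-a(t))$ and treating $\eta$ as an exogenous driving function, equation~\eqref{eq:kolmo2} becomes the forward Kolmogorov system of a time-inhomogeneous birth-and-death process on $\{0,\ldots,K\}$ with birth rate $\eta(t)$ and constant death rate~$\lambda$; existence and uniqueness of its linear solution are then standard. The real task is the self-consistency problem: find $\vec{\alpha}(t)$ whose first moment $a(t)=\sum_{j=0}^K j\alpha_j(t)$ reproduces the very rate $\eta(t)=\mu(U-a(t))$ used to drive it.

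Existence and uniqueness of this self-consistent solution I would obtain through a monotone iteration analogous to~\eqref{eq:scheme1}: starting with $\eta_0\equiv \mu U$, define $\vec{\alpha}^{(n)}(t)$ as the linear solution driven by $\eta_n(t)$, then set $\eta_{n+1}(t)=\mu(U-a_n(t))$ with $a_n(t)=\sum_{j=0}^K j\alpha_j^{(n)}(t)$. By the stochastic-dominance properties of Lemma~\ref{lem:A1} (Appendix~\ref{sec:A1}), transferred without difficulty to the truncated chain, the map $\eta\mapsto a$ is monotone non-decreasing and Lipschitz in the sup norm, so $\eta\mapsto \mu(U-a)$ is monotone non-increasing. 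Hence the even and odd subsequences $\{\eta_{2n}\}$ and $\{\eta_{2n+1}\}$ are monotone in opposite directions, uniformly bounded in $[0,\mu U]$, and converge pointwise; the Lipschitz estimate forces the two limits to coincide and yields a unique fixed point, hence a unique $\vec{\alpha}(t)$.

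For the equilibrium, passing to the limit $t\to\infty$ along any subsequence on which $a(t_k)\to \bar a$ and $\vec{\alpha}(t_k)\to\vec{\pi}$, the stationary version of~\eqref{eq:S1} is precisely that of an $M/M/1/K$ queue with constant arrival rate $\bar\eta=\mu(U-\bar a)$ and service rate~$\lambda$, whose unique invariant measure is the truncated geometric displayed in the theorem, with $\bar\beta=\bar\eta/\lambda$. Self-consistency $\bar a=\sum_{j=0}^K j\pi_j$ then produces the fixed-point equation. Its uniqueness is elementary: the function $\beta\mapsto (\lambda/\mu)\beta + m(\beta)$, with $m(\beta)=\beta/(1-\beta)-(K+1)\beta^{K+1}/(1-\beta^{K+1})$ the strictly increasing mean of the truncated geometric, is strictly increasing on $[0,\infty)$, vanishes at $0$, and tends to $+\infty$, so it equals $U$ at a single point. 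Since $\bar\beta$ is thus independent of the subsequence, a coupling argument identical to the one used at the end of Section~\ref{sec:thm} (based on the continuity estimate~\eqref{eq:eps} and Appendix~\ref{sec:A1}) upgrades subsequential to full convergence $\vec{\alpha}(t)\to\vec{\pi}$.

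The main obstacle I anticipate is ensuring that the stochastic-dominance and Lipschitz statements of Appendix~\ref{sec:A1} do transfer cleanly to the truncated chain. The saturation at level~$K$ appears as the boundary term $\mu(U-a(t))\alpha_K(t)z^K(1-z)$ in~\eqref{eq:S1}, which prevents a direct use of the contour-integral representation that drove Section~\ref{sec:thm}; however, the monotone-iteration route sidesteps this difficulty, since it relies only on one-dimensional birth-and-death monotonicity in the birth rate, a property that manifestly survives truncation. No rate of convergence is asserted in Theorem~\ref{thm:S2}, which is consistent with the fact that producing one would require additional spectral analysis.
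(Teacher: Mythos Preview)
Your approach aligns with the paper's: both rewrite~\eqref{eq:kolmo2} as $\frac{d\vec{\alpha}}{dt}=\vec{\alpha}(t)M(a(t))$ and launch an iteration with $a_0(t)\equiv 0$ (equivalently your $\eta_0\equiv\mu U$), appealing to Appendix~\ref{sec:A1} for comparison. The paper's sketch asserts the scheme is \emph{increasing}, $a_n(t)<a_{n+1}(t)<U$; you correctly observe instead that $\eta\mapsto\mu(U-a_\eta)$ is anti-monotone, so the even and odd subsequences move in opposite directions. Your description is the accurate one --- the paper's monotonicity claim, taken literally, does not hold (after $a_0=0$ one gets $a_2\leq a_1$, not $a_1<a_2$). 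You also go considerably further than the paper's sketch, which stops at the iteration and says nothing about the equilibrium or the convergence $\vec\alpha(t)\to\vec\pi$; your subsequential-limit argument together with the strict monotonicity of $\beta\mapsto(\lambda/\mu)\beta+m(\beta)$ handles that part cleanly.

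There is one genuine gap. Your sentence ``the Lipschitz estimate forces the two limits to coincide'' is not justified: a Lipschitz anti-monotone map can perfectly well have a nontrivial $2$-cycle unless its Lipschitz constant is strictly below~$1$, and Lemma~\ref{lem:A1}(iii) gives no such bound. The simplest repair exploits exactly the feature that distinguishes Model~2 from Model~1: the state space is the finite simplex in $\mathbb{R}^{K+1}$, the right-hand side of~\eqref{eq:kolmo2} is polynomial (hence locally Lipschitz) in $\vec\alpha$, and the simplex is compact and forward-invariant. Picard--Lindel\"of therefore yields global existence and uniqueness of $\vec\alpha(t)$ directly, with no iteration needed; the iteration then serves only the asymptotic analysis, where your argument is sound.
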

\begin{proof}
System~\eqref{eq:kolmo2}  rewrites in the matrix form
\begin{equation}\label{eq:mat2}
  \frac{d\>\alpha(t)}{dt} = \>\alpha(t)M(a(t)),
 \end{equation}
  where $\>\alpha(t)$ is a row vector and $M(a(t))$ a tridiagonal matrix,  denoted in this way to emphasize the dependence on $a(t)$. Now we introduce the  iterative vector scheme
\begin{equation}\label{eq:scheme3}
  \begin{cases}
      a_0(t) =  \>\alpha_0(t) =0,  \quad t\ge 0, \\[0.1cm]
     \DD a_n(t) = \sum_{j=0}^K j\alpha_{n,j}(t), \\[0.4cm]
     \>\alpha_{n+1}(0) =\>\alpha(0), \\[0.2cm]
    \DD \frac{d\>\alpha_{n+1}(t)}{dt} = \>\alpha_{n+1}(t)M(a_n(t)), \quad n\ge0,  
  \end{cases} 
  \end{equation}
 which can be shown, using Appendix~\ref{sec:A1}, to be increasing in the sense that $a_n(t)<a_{n+1}(t)<U$.

\end{proof}

\subsection{Model 3: mean-field limit viewed as a system of two queues in tandem with total capacity bounded by \texorpdfstring{$K<\infty$}{}}
The model is   the same as that described in Sections~\ref{sec:particle} and~\ref{sec:queueing}, with a finite capacity constraint $K$. The first (resp. second) queue of the tandem is still an $M(t)/M/\infty$ 
(resp. $./M(t)/1$) queue, but the total number of particles remains bounded by~$K$. Then, for $j,k\in \N$, $0\leq k+j\leq K$, the following mean-field equations hold (see~\cite{bourdais2020mean}), for all~$t\ge0$,
\begin{align}\label{eq:kolmo3}
  \frac{d\alpha_{j,k}(t)}{dt}+\left[\lambda d(t)\1_{\{j+k<K\}}+\mu j 
  +\lambda c(t)\1_{\{k>0\}}\right]\alpha_{j,k}(t)&= \nonumber\\
 \lambda d(t)\alpha_{j-1,k}(t) +\mu(j+1)\alpha_{j+1,k-1}(t)
  & +\lambda c(t)\alpha_{j,k+1}(t)\1_{\{j+k<K\}},
\end{align}
where 
\begin{equation} \label{eq:ratesS3}  
d(t)= 1-\sum_{j=0}^K \alpha_{j,0}(t),\quad  \mathrm{and} \quad 
c(t)= 1-\sum_{j=0}^K \alpha_{j,K-j}(t).
\end{equation}
Setting
\[
F(x,y,t) \egaldef \sum_{0\leq j+k\leq K} \alpha_{jk}(t) x^j y^k,
\quad \forall t\ge0,
\]
for $x,y$ arbitrary complex variables, the following functional equation holds.
\begin{align}\label{eq:S3}
&\frac{\partial F(x,y,t)}{\partial t} + \left[\lambda d(t)(1-x)+\lambda c(t)\Bigl(1-\frac{1}{y}\Bigr)\right]
F(x,y,t) \nonumber\\
& \quad = \mu(y-x)\frac{\partial F(x,y,t)}{\partial x} +\lambda c(t)\Bigl(1-\frac{1}{y}\Bigr) F(x,0,t)+\lambda d(t)(1-x) F_K(x,y,t),
\end{align}
where $F_K(x,y,t)=\sum_{j=0}^K \alpha_{j,K-j}(t)x^j y^{K-j}$, $d(t) = 1- F(1,0,t)$ and $c(t) = 1- F_K(1,1,t)$.

The analog of Theorem~\ref{thm:S1} can be stated as follows.
\begin{thm}\label{thm:S3}
For each $t\ge0$ there exists a unique distribution $\>\alpha(t)$ satisfying the nonlinear system~\eqref{eq:kolmo2}. In addition, for any $\lambda,\mu>0$, there is a unique equilibrium point
$\DD\lim_{t\to\infty}\>\alpha(t)=\>\pi >\>0$ given by 
\begin{align} \label{eq:invar3}
\pi_{jk} = \frac{1}{Z}\frac{\rho_R^j}{j!}{\rho_V^{k}}, \quad 0\le j+k\le K,
\end{align}
where $Z$ is a normalizing constant and $(\rho_R,\rho_V)$ is obtained as the unique solution of the  system 
\begin{equation}\label{eq:toto}
\begin{cases}
\rho_R &=\DD\frac{\lambda}{\mu}\biggl( 1-\DD\sum_{j=0}^K\pi_{j0}\biggr),\\[0.4cm]
U&= \DD\sum_{j,k} (j+k)\pi_{jk}.
\end{cases}
\end{equation}
\end{thm}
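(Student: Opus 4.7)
The plan is to mirror the strategy of Theorem~\ref{thm:S1}, exploiting the crucial simplification that here the state space $\{(j,k)\in\N^2:j+k\le K\}$ is finite. Existence and uniqueness of a global solution $\>\alpha(t)$ to \eqref{eq:kolmo3} on $[0,\infty)$ follow from the Cauchy--Lipschitz theorem, since the right-hand side is polynomial in the components of $\>\alpha(t)$ through the couplings $c(t)$ and $d(t)$ defined in \eqref{eq:ratesS3}; positivity and normalization $\sum_{j+k\le K}\alpha_{j,k}(t)=1$ propagate from $t=0$ by direct summation on \eqref{eq:kolmo3}, so the trajectory stays in a compact simplex. One can alternatively reconstruct $\>\alpha(t)$ by an iterative scheme analogous to \eqref{eq:scheme1}--\eqref{eq:scheme3}, updating $(c_n(t),d_n(t))$ at each step and invoking the stochastic monotonicity of Appendix~\ref{sec:A1} coordinatewise on the truncated lattice.

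Next I would identify the equilibrium. Letting $t\to\infty$ in \eqref{eq:kolmo3} with $c,d$ frozen to constants reduces the system to the global balance equations of a closed two-node Jackson tandem (an $M/M/\infty$ feeding an $M/M/1$) truncated to $j+k\le K$. By the BCMP truncation theorem, its stationary distribution takes the product form \eqref{eq:invar3} with parameters $\rho_R=\lambda d/\mu$, $\rho_V=d/c$, and normalizing constant $Z(\rho_R,\rho_V)=\sum_{j+k\le K}\rho_R^j\rho_V^k/j!$. Inserting this product form into the defining identities \eqref{eq:ratesS3} for $c,d$ gives the first line of \eqref{eq:toto}, while the mass-conservation relation $U=\sum_{j,k}(j+k)\pi_{j,k}$, intrinsic to the system as in Remark~\ref{rem:E2}, yields the second. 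Convergence $\>\alpha(t)\to\>\pi$ then follows by the coupling argument used after \eqref{eq:eps}: as soon as $(c(t),d(t))$ approaches its equilibrium value, \eqref{eq:kolmo3} is asymptotically autonomous on a finite simplex with a single attractor, and relaxation is exponential.

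The main obstacle is uniqueness of the admissible $(\rho_R,\rho_V)$ solving \eqref{eq:toto}. Unlike the unbounded case of Theorem~\ref{thm:S1}, which collapsed to a quadratic, here $Z(\rho_R,\rho_V)$ couples the two equations transcendentally. My plan is to fix $\rho_V\in[0,\infty)$ and observe that the first line of \eqref{eq:toto}, rewritten as $\rho_R=(\lambda/\mu)\bigl(1-Z^{-1}\sum_{j=0}^K\rho_R^j/j!\bigr)$, defines a continuous implicit relation $\rho_R=\Phi(\rho_V)$, well-defined because the right-hand side is strictly increasing in $\rho_V$ and has a unique root in $\rho_R$ at fixed $\rho_V$ (a standard contraction/monotonicity argument on $[0,\lambda/\mu]$). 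Substituting $\rho_R=\Phi(\rho_V)$ into the mass-conservation equation produces a scalar equation in $\rho_V$ whose left-hand side I expect to be strictly increasing, covering $[0,K)$ continuously as $\rho_V$ ranges over $[0,\infty)$, hence admitting exactly one admissible root for any $U\in(0,K)$. The delicate technical step is to justify strict monotonicity of this reduced scalar function; I would do this by differentiating $Z$ explicitly and recognizing the relevant combinations as variances (and covariances) of the truncated product measure, which are positive. This monotonicity both gives uniqueness of $(\rho_R,\rho_V)$ and, by implicit differentiation, the continuous dependence on parameters needed to transfer $(c(t),d(t))\to(c,d)$ into exponential relaxation of $\>\alpha(t)$ toward $\>\pi$.
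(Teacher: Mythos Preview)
Your plan follows the same skeleton as the paper's sketch: derive mass conservation, identify the product-form equilibrium by inserting the ansatz into the stationary equations, and pin down $(\rho_R,\rho_V)$ via the two constraints in~\eqref{eq:toto}. A few points of comparison are worth noting.

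First, the paper does not invoke Cauchy--Lipschitz or the BCMP truncation theorem directly; instead it works throughout with the generating function~\eqref{eq:S3} and its specializations $G(x,t)=F(x,x,t)$, $A(y,t)=F(1,y,t)$, $B(x,t)=F(x,1,t)$. In particular, mass conservation~\eqref{eq:mass2bis3} is obtained by dividing~\eqref{eq:G3} by $1-x$ and letting $x\to1$, exactly as in Section~\ref{sec:additional}; you should carry this out rather than just gesture at Remark~\ref{rem:E2}. Likewise, the product form is verified in the paper as Lemma~\ref{lem:pform3} by plugging~\eqref{eq:pform3} into the stationary version~\eqref{eq:stat3} of~\eqref{eq:S3}, which yields the single relation $\mu p=\lambda d$; your identification $\rho_R=\lambda d/\mu$, $\rho_V=d/c$ is consistent with this.

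Second, for uniqueness of $(\rho_R,\rho_V)$ the paper does not argue internally: it simply combines $\mu p=\lambda d$ with the mass constraint~\eqref{eq:mass3} and cites \cite[Theorem~3]{fricker2022mean}. Your proposed reduction to a scalar equation in $\rho_V$ via an implicit map $\rho_R=\Phi(\rho_V)$, with monotonicity established through variance/covariance positivity of the truncated measure, is a reasonable self-contained substitute, and in fact supplies more than the paper does here.

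Third, neither the paper nor your outline fully handles the convergence $(c(t),d(t))\to(c,d)$. In Models~1 and~2 the iterative schemes~\eqref{eq:scheme1} and~\eqref{eq:scheme3} are monotone because a \emph{single} coupling function is updated; with two coupled rates it is not obvious that the analogous iteration is monotone in both coordinates simultaneously, so your appeal to Appendix~\ref{sec:A1} ``coordinatewise'' would need justification. The paper sidesteps this entirely (it only says ``we only sketch out a few arguments'' and stops at Lemma~\ref{lem:pform3}), so this is not a defect relative to the paper, but you should be aware that the step is genuinely open in your write-up as well.
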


As previously announced, we only sketch out a few arguments. To this end, as in Section~\ref{sec:additional}, we write for Model~3 the following subsidiary equations, which emanate directly from~\eqref{eq:S3}.

\paragraph{(1)} Letting $G(x,t)\egaldef F(x,x,t)$, with $G(1,t)=1$,  we have
\begin{align}\label{eq:G3}
 \frac{\partial G(x,t)}{\partial t}+\lambda \left(d(t)-\frac{c(t)}{x} \right) (1-x)  G(x,t)=\lambda c(t)\left( 1-\frac{1}{x} \right)F(x,0,t)+\lambda d(t) (1-x) x^K(t)(1-c(t)).
\end{align}
 Then, dividing \eqref{eq:G3} by $1-x$ and letting~$x\to1$, we get 
\[
\frac{\partial }{\partial t} \frac{\partial }{\partial x}G(1,t) =0,
\]
or, equivalently,
\begin{equation}\label{eq:mass2bis3}
      \frac{\partial G(1,t)}{\partial x} = U,
   \end{equation} 
which depicts a mass conservation valid for $t\ge0$.

\paragraph{(2)} For $A(y,t)\egaldef F(1,y,t)$, with $A(1,t)=1$ and  $A(0,t)=1-d(t)$, equation~\eqref{eq:A} still holds, just replacing $\lambda$ by $\lambda c(t)$.  
\begin{align}\label{eq:A3}
 \frac{\partial A(y,t)}{\partial t}+\lambda c(t)\left(1-\frac{1}{y} \right) A(y,t) = \mu(1-y)\frac{\partial F(1,y,t)}{\partial x} + 
 \lambda c(t)\left( 1-\frac{1}{y} \right)A(0,t).
\end{align}

\paragraph{(3)} Analogously, letting $B(x,t)\egaldef F(x,1,t)$, with $B(1,t)=1$, 
\begin{align*}
 \frac{\partial B(x,t)}{\partial t}+\lambda  d(t)(1-x) B(x,t)=\mu(1-x)\frac{\partial B(x,t)}{\partial x}+\lambda  d(t)(1-x) F_K(x,1,t),
\end{align*}
which yields the equivalent of~\eqref{eq:S2r}, just replacing 
$\lambda$ by $\lambda c(t)$, i.e.
\begin{align}\label{eq:S2r3}
\frac{d r(t)}{dt}= \lambda c(t)d(t)-\mu r(t),
\end{align}
where $\DD r(t)\egaldef\frac{\partial F(1,1,t)}{\partial x}$ stands for the mean number of customers in the first queue of the tandem, at time~$t$. 

\begin{rem}\label{rem:F3} As equation~\eqref{eq:S2r}, equation \eqref{eq:S2r3} seems quite natural. Indeed, it can be viewed as a fluid equation for the first queue of the tandem, which is a $M(t)/M/\infty$  queue with arrival rate $\lambda c(t)d(t)$ and service rate $\mu$, due to the additional capacity constraint, which translates into a  probability of acceptance $d(t)$.
\end{rem}
The following result is the analog of Lemma~\ref{lem:pform}.
\begin{lem} \label{lem:pform3} Any \emph{reachable} stationary distribution
of the tandem driven by equation~\eqref{eq:S3} has the generating function $F(x,y)\DD \egaldef\lim_{t\to\infty} F(x,y,t)$, 
\begin{equation}\label{eq:pform3}
F(x,y) = \dfrac{1}{Z}\sum_{j+k\leq K} \frac{(px)^j}{j!}\,(qy)^k,
\end{equation}
where $\DD Z=\sum_{j+k\leq K} \frac{p^j}{j!}\,q^k$ is the normalizing constant and where  $p>0$ and $0\le q<1$ are uniquely determined by $\mu p = \lambda  d$ with 
$\DD d= 1-Z^{-1}\sum_{j=0}^K p^j/j!$,
and by the mass conservation equation 
\begin{align}\label{eq:mass3}
    U=\dfrac{1}{Z}\sum_{j+k\leq K} (j+k)\frac{p^j q^k}{j!}.
\end{align}
\end{lem}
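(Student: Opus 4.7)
My plan parallels the proof of Lemma~\ref{lem:pform}, the genuine novelty being the truncation of the sum defining $F$. First, passing to the limit $t\to\infty$ in~\eqref{eq:S3} and writing $F(x,y)\egaldef\lim_{t\to\infty}F(x,y,t)$ with $d\egaldef 1-F(1,0)$, $c\egaldef 1-F_K(1,1)$, any reachable stationary law must satisfy
\begin{equation*}
[\lambda d(1-x)+\lambda c(1-1/y)]F(x,y)=\mu(y-x)\frac{\partial F(x,y)}{\partial x}+\lambda c(1-1/y)F(x,0)+\lambda d(1-x)F_K(x,y).
\end{equation*}
Existence of the limits $d,c$, and the reduction to a single stationary law, rest on the mass conservation~\eqref{eq:mass2bis3} and the frozen-rate coupling used at the end of Section~\ref{sec:thm}, which I would quote rather than redo.

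The substantive step is to insert the truncated product form~\eqref{eq:pform3} into the stationary equation and verify. Two elementary reindexings of the finite sum defining $F$ yield the identities
\[
\frac{\partial F}{\partial x}=p(F-F_K),\qquad F(x,y)-F(x,0)=qy\bigl(F(x,y)-F_K(x,y)\bigr).
\]
Substituting these and writing $\mu(y-x)=\mu(y-1)+\mu(1-x)$ and $1-1/y=(y-1)/y$, the difference (LHS)$-$(RHS) collapses to
\[
(F-F_K)\bigl[(1-x)(\lambda d-\mu p)+(y-1)(\lambda cq-\mu p)\bigr].
\]
Since $(1-x)$ and $(y-1)$ are linearly independent as polynomials in $(x,y)$ and $F-F_K\not\equiv 0$, the equation forces the two scalar relations $\mu p=\lambda d$ and $\mu p=\lambda cq$. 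A short third reindexing
\[
cq=\frac{q}{Z}\sum_{j+k<K}\frac{p^j q^k}{j!}=\frac{1}{Z}\sum_{\substack{j+k\le K\\ k\ge 1}}\frac{p^j q^k}{j!}=d
\]
shows that the second relation is automatic once $F$ has the form~\eqref{eq:pform3}, so only $\mu p=\lambda d(p,q)$ with $d$ defined as in the statement need be retained.

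Finally, I would establish existence and uniqueness of $(p,q)\in(0,\infty)\times[0,1)$ solving $\mu p=\lambda d(p,q)$ together with the mass equation~\eqref{eq:mass3}. Exploiting the log-concavity and covariance identities for the truncated weights $p^j q^k/j!$, the two equations viewed as a map from $(p,q)$ to $\mathbb{R}^2$ can be shown to have non-degenerate Jacobian and the correct monotone behavior at the boundaries $p\to 0^+$ and $q\to 1^-$, yielding the unique admissible root; this monotonicity step is where I expect the main technical work to lie, the rest of the argument being direct verification or transfer of earlier material. The identification of \emph{any} reachable stationary law with~\eqref{eq:pform3} then follows by the same frozen-rate coupling as above, since the process $\widetilde{\xi}(t)$ obtained by freezing $d,c$ to their limits is a truncated Jackson network on $\{j+k\le K\}$ whose classical BCMP stationary law is precisely~\eqref{eq:pform3}.
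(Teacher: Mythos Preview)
Your proposal is correct and follows essentially the same route as the paper: take $t\to\infty$ in~\eqref{eq:S3} to obtain the stationary equation, insert the truncated product form~\eqref{eq:pform3}, and read off the necessary relation $\mu p=\lambda d$, which together with the mass conservation~\eqref{eq:mass3} fixes $(p,q)$. The paper compresses the algebraic verification into the phrase ``after some easy algebra'' and delegates the uniqueness of $(p,q)$ to~\cite[Theorem~3]{fricker2022mean}, whereas you make the reindexing identities and the automatic relation $cq=d$ explicit and sketch a self-contained monotonicity/Jacobian argument for uniqueness; both are sound and amount to the same proof.
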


\begin{proof}
Letting $t\to\infty$ in equation~\eqref{eq:S3}, we obtain the following stationary Kolmogorov's equations of a \emph{quasi-reversible} system, which, after setting 
  $F(x,y)\egaldef F(x,y,\infty)$, have the form
\begin{align}\label{eq:stat3}
&\left(\lambda d(1-x) + \lambda c\biggl(1-\frac{1}{y}\biggr)\right) F(x,y) \nonumber\\
&= \mu(y-x) \frac{\partial F(x,y)}{\partial x} + 
\lambda c \biggl(1-\frac{1}{y}\biggr)F(x,0)+\lambda d(1-x)F_K(x,y),
\end{align}
where $c=c(\infty)$ and $d=d(\infty)$.

  Then, inserting \eqref{eq:pform3} into~\eqref{eq:stat3}, we obtain  (after some easy algebra) the necessary relation 
\begin{equation*}
  \mu p = \lambda d,
\end{equation*}
which, combined with the mass conservation~\eqref{eq:mass3}, yields a system of two equations  with two unknowns $p$ and $q$, having a unique solution (see~\cite[Theorem 3]{fricker2022mean} for details). The proof of Lemma~\ref{lem:pform3} is concluded.
\end{proof}

From the previous lemma, we get directly \eqref{eq:invar3}, which corresponds to the last point of Theorem~\ref{thm:S3}.
\appendix 
\section{Queues with time-dependent arrival rates}
\subsection{The \texorpdfstring{$M(t)/M/1/\infty$}{} queue} \label{sec:A1}
This section briefly presents some basic features of the operator (birth and death process type) describing the evolution of the 
$M(t)/M/1/\infty$ queue, with time varying  arrival rate $\beta(\cdot)\in\C^+[0,\infty]$ and  constant service rate $\mu$.

 The probabilities $p_n(t)\egaldef \Pb(Z(t)=n), \,n\geq 0,$
where Z(t) denotes the number in the queue at time $t$, obey the
following set of {\em forward} differential equations:
\begin{equation}\label{eqA1}
\begin{cases}
\DD \dfrac{dp_0(t)}{dt}=-\beta(t)p_{0}(t)+\mu p_1(t),\\[0.3cm] \dfrac{dp_n(t)}{dt}=\beta(t)
p_{n-1}(t)-(\beta(t)+\mu)p_n(t)+\mu p_{n+1}(t),\quad n\geq 1,
\end{cases}
\end{equation}
which we rewrite in operator form
\begin{equation}\label{eq:A2}
\dfrac{d\mathbf{P_{\beta}}(t)}{dt} =
\mathbf{P_{\beta}}(t)\mathbf{K_{\beta}}(t),
\end{equation}
 where $\mathbf{K_{\beta}}(t)$ is a generator (represented by an
infinite matrix) and $\mathbf{P_{\beta}}(t)$ is an infinite row vector
belonging to the Banach space $\ell_1$ of absolutely summable
sequences. It is known either from a probabilistic point of view
(e.g. \cite{FEL, ReLe}) or by an analytic argument (e.g. \cite{CARTAN}),
that (\ref{eq:A2}) has, for all $t\geq 0$, a unique solution in
$\ell_1$. In addition, the generator $\mathbf{K_{\beta}}(t)$ has a
continuous spectrum of eigenvalues, located on the negative real line.

Similarly, the distribution function
$$s_n(t) \egaldef \Pb(Z(t)\leq n), \quad \forall n\geq 0,$$ satisfies
the system
\begin{equation}\label{eqA3}
\begin{cases}
\dfrac{ds_0(t)}{dt}=-(\beta (t)+ \mu)s_{0}(t)+ \mu s_1(t),\\[0.3cm] \dfrac{ds_n(t)}{dt}=\beta(t) s_{n-1}(t)-(\beta(t)+\mu)s_n(t)+ \mu s_{n+1}(t),\quad n\geq 1,
\end{cases}
\end{equation}
which will be written as
\begin{equation}\label{eqA4}
\dfrac{d\mathbf{S_{\beta}}(t)}{dt} =
\mathbf{S_{\beta}}(t)\mathbf{L_{\beta}}(t), 
\end{equation}
where $\mathbf{S_{\beta}}(t)$ denotes the row vector
$\mathbf{S_{\beta}}(t) \egaldef (s_0(t),s_1(t),\ldots ).$ 

\begin{lem}\label{lem:A1} 
Let $\mathbf{P_{\beta}}(t)$ and $\mathbf{\widetilde{P}_{\beta}}(t)$ be the
solutions of (\ref{eq:A2}) corresponding to respective initial
conditions $\mathbf{P_{\beta}}(0)$ and
$\mathbf{\widetilde{P}_{\beta}}(0)$. The following properties hold:
\begin{itemize}
\item[{\rm (i)}] If $\mathbf{P_{\beta}}(0) \geq
\mathbf{\widetilde{P}_{\beta}}(0) \ge 0$, then $\mathbf{P_{\beta}}(t)
\geq \mathbf{\widetilde{P}_{\beta}}(t) \ge0, \ \forall t \geq 0.$
\item[{\rm (ii)}] Let $\beta(\cdot), \gamma(\cdot) \in \C^+[0,\infty]$, such
that 
\[\beta(t) \leq \gamma(t), \quad \forall t \geq 0, \quad \text{and}
 \quad \mathbf{S_{\beta}}(0) \geq \mathbf{S_{\gamma}}(0).
 \] 
 Then \emph{(stochastic dominance)}
 \[\mathbf{S_{\beta}}(t) \ge \mathbf{S_{\gamma}}(t) ,\ \forall t \ge0.
\]
In particular, with the notation of equation \eqref{eq:sys},
$H_{\beta}(t) \ge H_{\gamma}(t),\ \forall t\ge0$.
\item[\rm{(iii)}] For any $\beta(\cdot), \gamma(\cdot) \in \C^+[0,\infty]$,
with $\mathbf{P_{\beta}}(0)=\mathbf{P_{\gamma}}(0)$, we have \emph{the
Lipschitz condition} 
\begin{equation}\label{eq:A5}
|\mathbf{P_{\beta}}(t) - \mathbf{P_{\gamma}}(t)| \leq
K\|{\beta}(\cdot)-{\gamma}(\cdot)\|, 
\end{equation} where $K$ is an
absolute constant independent of $t$ and $|\cdot|$ denotes, for an
arbitrary vector $\Xf$, the usual $\ell_1$-norm $|\Xf|=\sum_i|x_i|$.
\item[{\rm (iv)}] The $M(t)/M/1/\infty$ queue with arrival
rate $\delta(t)$ and the $M/M/1/\infty$ queue with arrival
rate $\delta=\lim_{t\rightarrow\infty}\delta(t)$ have the same
stationary regime.
\end{itemize}
\end{lem}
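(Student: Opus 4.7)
The plan is to prove the four parts in order, using the linear-ODE structure of \eqref{eq:A2} together with a pathwise coupling, with the main difficulty concentrated in part (iii). For part (i), I would observe that $\mathbf{K}_\beta(t)$ is, at every instant $t$, a Q-matrix with non-negative off-diagonal entries. Writing $\mathbf{P}_\beta(t) = \mathbf{P}_\beta(0)U_\beta(0,t)$, where $U_\beta(s,t)$ denotes the associated propagator of \eqref{eq:A2}, a Picard iteration on the integral form of \eqref{eq:A2} (or a Dyson expansion) exhibits $U_\beta(s,t)$ as a sub-stochastic matrix with non-negative entries. Consequently, left-multiplication by $U_\beta(0,t)$ preserves the coordinatewise partial order on $\ell_1$, so $\mathbf{P}_\beta(0) \ge \widetilde{\mathbf{P}}_\beta(0) \ge 0$ propagates to all times $t \ge 0$.

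For part (ii), I would use a pathwise coupling. On a common probability space, take a Poisson point process $\Pi$ on $\mathbb{R}_+\times\mathbb{R}_+$ with intensity $ds\,du$ and an independent Poisson process $\Sigma$ of rate $\mu$. An arrival to the $\beta$-queue occurs at time $s$ whenever $(s,u)\in\Pi$ with $u\le\beta(s)$, and similarly for $\gamma$; since $\beta\le\gamma$, every arrival to the $\beta$-queue is simultaneously an arrival to the $\gamma$-queue. Potential service epochs are read off $\Sigma$ and executed whenever the corresponding queue is non-empty. The hypothesis $\mathbf{S}_\beta(0)\ge\mathbf{S}_\gamma(0)$ is exactly the stochastic ordering of the initial distributions, so by Strassen's theorem we may couple the initial states to satisfy $Z_\beta(0)\le Z_\gamma(0)$ a.s. A routine induction on the ordered jump epochs then shows that $Z_\beta(t)\le Z_\gamma(t)$ is preserved for all $t\ge0$. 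Taking probabilities yields $\mathbf{S}_\beta(t)\ge\mathbf{S}_\gamma(t)$, and specializing to $n=0$ gives $H_\beta(t)\ge H_\gamma(t)$.

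For part (iii), I would apply the variation-of-constants formula to $\mathbf{D}(t)\egaldef\mathbf{P}_\beta(t)-\mathbf{P}_\gamma(t)$. Using $\mathbf{D}(0)=0$ and
\begin{equation*}
\dot{\mathbf{D}}(t) = \mathbf{D}(t)\mathbf{K}_\beta(t) + \mathbf{P}_\gamma(t)\bigl(\mathbf{K}_\beta(t)-\mathbf{K}_\gamma(t)\bigr),
\end{equation*}
one obtains
\begin{equation*}
\mathbf{D}(t) = \int_0^t \mathbf{P}_\gamma(s)\bigl(\mathbf{K}_\beta(s)-\mathbf{K}_\gamma(s)\bigr) U_\beta(s,t)\,ds.
\end{equation*}
The operator $\mathbf{K}_\beta(s)-\mathbf{K}_\gamma(s)$ affects only the two columns modified by a change in the arrival rate, so its action on any probability vector has $\ell_1$-norm at most $2|\beta(s)-\gamma(s)|$. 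Decomposing the integrand into positive and negative parts and exploiting that the Markov propagator $U_\beta(s,t)$ is an $\ell_1$-contraction on non-negative vectors by part (i), I arrive at
\begin{equation*}
|\mathbf{D}(t)| \leq 2\int_0^t |\beta(s)-\gamma(s)|\,ds,
\end{equation*}
which is \eqref{eq:A5} with $K=2$, provided $\|\cdot\|$ is interpreted as the $L^1$-norm on $\mathbb{R}_+$.

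Finally, for (iv), I would sandwich the time-varying queue between two constant-rate queues via (ii). Given $\varepsilon>0$, choose $T_\varepsilon$ so that $|\delta(t)-\delta|<\varepsilon$ for all $t\ge T_\varepsilon$. Starting at $t=T_\varepsilon$ from a common initial law, part (ii) implies that $Z_{\delta(\cdot)}(t)$ lies, in the stochastic order, between the states of two ergodic $M/M/1/\infty$ queues with constant arrival rates $\delta\pm\varepsilon$ (ergodic since $\delta<\lambda$ and $\varepsilon$ is small); each such queue converges exponentially fast to its geometric stationary law with parameter $(\delta\pm\varepsilon)/\lambda$. Letting $t\to\infty$ and then $\varepsilon\to0$ forces $Z_{\delta(\cdot)}(t)$ to converge to the geometric law of parameter $\delta/\lambda$. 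The step I expect to be trickiest is producing the uniform-in-$t$ constant in (iii), since the propagator is only an $\ell_1$-contraction on non-negative vectors; the positive/negative decomposition is precisely what salvages an absolute constant, and the remaining steps are controlled applications of standard comparison and coupling techniques.
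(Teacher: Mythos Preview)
Your argument is correct; the main departures from the paper are in parts (ii) and (iii).

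For (ii) the paper stays purely analytic: setting $\mathbf{R}(t)=\mathbf{S}_\beta(t)-\mathbf{S}_\gamma(t)$, it derives the non-homogeneous equation $\dot{\mathbf{R}}=\mathbf{R}\,\mathbf{L}_\gamma+(\gamma-\beta)\mathbf{D}$ with a non-negative forcing vector $\mathbf{D}$, and concludes $\mathbf{R}(t)\ge 0$ from the positivity of the operator $\mathbf{L}_\gamma$ obtained in (i) (via the change of functions $s_n=w_n\exp(-\mu t-\int_0^t\beta)$). Your pathwise thinning coupling is equally valid and probabilistically more transparent; the ODE route has the advantage of applying verbatim to the matrix systems of Section~\ref{sec:others} without building a probability space.

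For (iii) the paper differentiates $\mathbf{P}_\beta$ with respect to $\beta(\cdot)$ in the sense of Banach-space calculus, obtaining $\mathbf{Q}_\beta(t)=\int_0^t\mathbf{P}_\beta(s)\,\mathbf{M}\,\Phi(t,s)\,ds$ via the fundamental solution~$\Phi$. Your direct variation-of-constants on the difference $\mathbf{D}(t)$ produces essentially the same integral in finite-difference form and the explicit bound $2\int_0^t|\beta-\gamma|$. Both routes yield an integral over $[0,t]$, so a constant independent of $t$ in~\eqref{eq:A5} is clean only under your $L^1$-reading of $\|\cdot\|$; the paper's proof is no more detailed on this point. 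For the actual uses in the paper (uniqueness in Lemma~\ref{lem:dlimit} and the coupling around~\eqref{eq:eps}) your estimate already suffices.

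One small simplification: since any sub-stochastic matrix $P$ satisfies $|vP|_1\le|v|_1$ for \emph{every} $v\in\ell_1$ (triangle inequality plus $\sum_j P_{ij}\le 1$), the propagator $U_\beta(s,t)$ is an $\ell_1$-contraction on signed vectors too, so the positive/negative decomposition you flag as the delicate step in (iii) is not actually needed.
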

\begin{proof}
As for point (i), it is not difficult to see that
$\mathbf{K_{\beta}}(t)$ and $\mathbf{L_{\beta}}(t)$ are positive
operators. For instance, making in \ref{eqA3} the change of functions
\[
s_n(t)= w_n(t)\exp\Bigl(-\mu t-\int_0^t\beta(s)ds\Bigr),\quad n\geq 0,
\]
leads to the system
\[\begin{cases}
\dfrac{dw_0(t)}{dt}= \mu w_{1}(t), \\[0.3cm]
\dfrac{dw_n(t)}{dt}=\beta(t)w_{n-1}(t) + \mu w_{n+1}(t),\quad
n\geq 1,
\end{cases}
\]
which has the form 
\[
\dfrac{d\mathbf{W_{\beta}}(t)}{dt} =
\mathbf{W_{\beta}}(t)\mathbf{\widetilde{L}_{\beta}}(t), 
\]
where $\mathbf{\widetilde{L}_{\beta}}(t)$ has only positive
coefficients.  A similar argument can be used for the positivity of
$\mathbf{K_{\beta}}(t)$.

\medskip
The stochastic dominance in (ii) follows now from (i). Indeed, setting
\[
\mathbf{R}(t) = \mathbf{S_{\beta}}(t)- \mathbf{S_{\gamma}}(t),
\]
the row vector $\mathbf{R}(t)$ satisfies the non homogeneous differential
equation 
\begin{equation}\label{eq:A6}
\dfrac{d\mathbf{R}(t)}{dt} = \mathbf{R}(t)\mathbf{L_{\gamma}}(t) +
\bigl(\gamma (t)-\beta(t)\bigr) \mathbf{D}(t),
\end{equation}
where $\mathbf{D}(t) = (d_0(t), d_1(t), \ldots )$, with $d_n(t) =
[s_{n+1}(t) -s_{n}(t)]_{\beta}\,.$ By (i), the vector $\mathbf{D}(t)$
has non-negative components and the operator $\mathbf{L_{\gamma}}(t)$
is positive, whence it follows that the solutions of \eqref{eq:A6} are
also non negative. 

To prove (iii), we shall use differential calculus in
 Banach spaces. In this framework, most of the classical results for
 the real line or the complex plane apply without substantial
 modification.

For any $\beta(\cdot)\in\C^+[0,\infty]$, with $0\leq\|\beta\|\leq B$, take an arbitrary perturbation function $\Delta(\cdot)$, with
$\beta(t) + \Delta(t) \in \C^+[0,\infty]$. When it exists, the
partial derivative with respect to $\beta(\cdot)$ of a differentiable mapping
$$g : \C^+[0,\infty]\times [0,\infty] \rightarrow \ell_1 $$ is a
 functional (see \cite{CARTAN}) written $\DD\frac{\partial g(t)}{\partial
 \beta}$. With this notation, one sees easily that $\DD\mathbf{Q}_{\beta}
 \egaldef \frac{\partial \mathbf{P_{\beta}}(t)}{\partial \beta}$,
 where $P_{\beta}$ satisfies (\ref{eq:A2}), must be a solution of the
 following non-homogeneous linear differential equation
\begin{equation}\label{eq:A7}
\dfrac{d\mathbf{Q}_{\beta}(t)}{dt} =
\mathbf{Q}_{\beta}(t)\mathbf{K_{\beta}}(t)+\mathbf{P_{\beta}}(t)\mathbf{M},
 \quad \mathbf{Q}(0) = 0,
\end{equation}
where $\mathbf{M}$ is a constant infinite matrix given by
\[
\begin{pmatrix}
-1 & 1 & 0 & 0 & \dots \\ 
0 & -1 & 1 & 0 & \dots \\ 
0 & 0 & -1 & 1 & \dots \\
0 & 0 & 0 & -1 & \dots \\
\vdots & \vdots & \vdots & \vdots & \dots
\end{pmatrix}.
\]
Then the solution of \eqref{eq:A7} writes in the form 
\[\mathbf{Q}_{\beta}(t) = \int_0^t \mathbf{P_{\beta}}(s)\mathbf{M}{\bf
\Phi}(t,s) ds, 
\]
where ${\bf \Phi}(t,s)$ is the so-called fundamental
solution (see \cite{CARTAN}) of the homogeneous equation of type
\eqref{eq:A2}. The preceding argument yields directly the rough estimate
\begin{equation}\label{eq:A8}
\sup_{\beta}\|\mathbf{Q}_{\beta}\| = \sup_{\beta}\sup_{t\geq
0}\|\mathbf{Q}_{\beta}(t)\|\leq K ,
\end{equation}
where $K$ is a bounded constant.  Since $\mathbf{Q}_{\beta}$ is the
derivative with respect to $\beta(\cdot)$ of the function
$\mathbf{P_{\beta}}$, defined on the Banach space $\C^+[0,\infty]$,
\eqref{eq:A8}) gives the  Lipschitz condition \eqref{eq:A5}.

The last property (iv) in the lemma can be viewed as an immediate consequence of the stochastic ordering contained in (i) and (ii), and details will be omitted.
The proof of the Lemma is concluded.

\end{proof}
\bibliography{FF-Autolib}
\bibliographystyle{acm}

\end{document}